\documentclass{amsart}
\usepackage{amssymb}

\newtheorem{theorem}{Theorem}
\newtheorem{remark}[theorem]{Remark}
\newtheorem{lemma}[theorem]{Lemma}

\newtheorem{prop}[theorem]{Proposition}
\newtheorem{example}[theorem]{Example}
\newtheorem{notation}[theorem]{Notation}
\newtheorem{assumption}[theorem]{Assumption}
\numberwithin{equation}{section}
\numberwithin{theorem}{section}


\def\P{{\mathbb P}}        
\def\E{{\mathbb E}}        


\title[Small deviations and
second-order chaos]
{Small deviations for time-changed Brownian motions and applications to
second-order chaos}
\author[Dobbs]{Daniel Dobbs}
\address{Department of Mathematics\\
Huntington University \\
Huntington, IN 46750 USA} \email{ddobbs@huntington.edu}
\author[Melcher]{Tai Melcher{$^*$}}
\thanks{\footnotemark {$^*$} This research was supported in part by NSF
Grants DMS-0907293 and DMS-1255574.}
\address{Department of Mathematics\\
University of Virginia\\ Charlottesville, VA 22903 USA}
\email{melcher@virginia.edu}

\keywords{Small deviations, homogeneous chaos}
\subjclass[2010]{Primary 60G15; 
Secondary
60G51 
60F17} 

\begin{document}

\begin{abstract}
We prove strong small deviations results for Brownian motion under independent
time-changes satisfying their own asymptotic criteria.
We then apply these results to certain stochastic integrals which are
elements of second-order homogeneous chaos.
\end{abstract}

\maketitle
\tableofcontents

\section{Introduction}\label{s.intro}

In this paper, we study small deviations for some time-changed Brownian
motions, for the purpose of applications to
certain elements of Wiener chaos.
Large deviation estimates for Wiener chaos are well-studied (see for example
\cite{Ledoux1994}), largely due to the work of Borell (see for example
\cite{Borell1978} and \cite{Borell1984}).  However, small deviations in this
setting are much less understood and are of interest for their myriad
interactions with other concentration, comparison, and correlation
inequalities as well as various limit laws for stochastic processes; see for example the surveys \cite{LiShao2001} and
\cite{Lifshits1999}.
The present work gives strong small
deviations results for certain elements of second-order homogeneous chaos.
In particular, let $(\mathcal{W},\mathcal{H},\mu)$ be an abstract Wiener
space, $\{W_t\}_{t\ge0}$ denote Brownian motion on $\mathcal{W}$, and
$\omega:\mathcal{W}\times\mathcal{W}\rightarrow\mathbb{R}$ be a continuous
bilinear antisymmetric map .  We will study processes $\{Z(t)\}_{t\ge0}$ of the form
\begin{equation}
\label{e.zt}
Z(t) := \int_0^t \omega(W_s,dW_s).
\end{equation}
(A precise definition is given in Section \ref{s.defZ}.)
In particular, we show that $Z$ is equal in distribution to
a Brownian motion running on an independent random clock for which
small deviation probabilities are known, and thus the small deviations
behavior of $Z$ follows.  From these results one may infer, for example, a functional law of iterated
logarithm and hence a Chung-type law of iterated logarithm for $Z$.  To the authors'
knowledge, these are the first results for small deviations of elements of
Wiener chaos in the infinite-dimensional context beyond the first-order
Gaussian case.

\subsection{Statement of main results}

We first discuss the general small deviations result for time-changed Brownian
motion we will be using.  We will assume that the random clocks satisfy the
following.

\begin{assumption}
\label{a.a}
Suppose $\{C(t)\}_{t\ge0}$ is a continuous non-negative non-decreasing process such that
$C(0)=0$ and there exist $\alpha>0$, $\beta\in\mathbb{R}$, and a non-decreasing function
$K:(0,\infty)\rightarrow(0,\infty)$ such that
for any $m\in\mathbb{N}$, $\{d_i\}_{i=1}^m\subset(0,\infty)$ a decreasing
sequence, and $0=t_0< t_1<\cdots <t_m$,
\begin{equation}
\label{e.PC}
\lim_{\varepsilon\downarrow 0}
		\varepsilon^{\alpha}|\log\varepsilon|^{\beta} \log
		\P\left(\sum_{i=1}^m d_i \Delta_i C \le \varepsilon\right)
	= - \left( \sum_{i=1}^m \left(d_i^\alpha K(t_{i-1},t_i)
		\right)^{1/(1+\alpha)}\right)^{(1+\alpha)}
\end{equation}
where $\Delta_i C = C_{t_i}-C_{t_{i-1}}$.
\end{assumption}
By the exponential Tauberian theorem (see Theorem \ref{t.tauberian}), equation (\ref{e.PC}) is equivalent to
\begin{multline}
\label{e.EC}
\lim_{\lambda\rightarrow\infty}
	\lambda^{-\alpha/(1+\alpha)} (\log\lambda)^{\beta/(1+\alpha)}
		\log\E\left[\exp\left(-\lambda\sum_{i=1}^m d_i \Delta_i C \right)\right] \\
	= - (1+\alpha)^{1+\beta/(1+\alpha)}\alpha^{-\alpha/(1+\alpha)}\sum_{i=1}^m \left(d_i^\alpha K(
		t_{i-1},t_i)\right)^{1/(1+\alpha)}.
\end{multline}
Also via the exponential Tauberian theorem, equation (\ref{e.PC}) clearly
holds when $C$ is a subordinator satisfying
\[ \lim_{\varepsilon\downarrow 0}
		\varepsilon^{\alpha}|\log\varepsilon|^{\beta}
		\log \P(C(t) \le \varepsilon)
	= -K(t) \]
for any $t>0$, although the additional requirement of continuity makes this example trivial (since in this case $C(t)=ct$ a.s.~for some $c\ge0$).
More generally, (\ref{e.PC}) holds if $C$ has independent increments which satisfy
\[ \lim_{\varepsilon\downarrow 0}
		\varepsilon^{\alpha}|\log\varepsilon|^{\beta}
		\log \P(C(t)-C(s) \le \varepsilon)
	= -K(s,t) \]
for all $0\le s<t$.  However, it is not necessary for $C$ to have independent or
stationary increments for Assumption \ref{a.a} to hold.  One important source of
examples for the present paper is the following theorem from \cite{Li2001} for
weighted $L^p$ norms of a Brownian motion.

\begin{theorem}
\label{t.6.4}
Let $p\in[1,\infty)$ and $\rho:[0,\infty)\rightarrow[0,\infty]$ be a Lebesgue measurable function
satisfying
\begin{itemize}
\item[(i)] $\rho$ is bounded or non-increasing on $[0,a]$ for some $a>0$;
\item[(ii)] $t^{(2+p)/p}\rho(t)$ is bounded or non-decreasing on $[A,\infty)$
for some $A<\infty$;
\item[(iii)] $\rho$ is bounded on $[a,A]$; and
\item[(iv)]  $\rho^{2p/(p+2)}$ is Riemann integrable on $[0,\infty)$.
\end{itemize}
Then
\[
\lim_{\varepsilon\downarrow0} \varepsilon^{2/p}\log\P\left(\int_0^\infty
		\rho(s)^p|B(s)|^p\,ds\le\varepsilon\right)
	= -\kappa_p\left(\int_0^\infty\rho(s)^{2p/(2+p)}\,ds\right)^{(2+p)/p},
\]
where $\kappa_p= 2^{2/p}p\left(\frac{\lambda_1(p)}{2+p}\right)^{(2+p)/2}$ for
\[ \lambda_1(p)
	= \inf_{ {\tiny \begin{array}{cc} \phi\in L^2(-\infty,\infty) \\
		\|\phi\|=1\end{array}}}
	\left\{\int_{-\infty}^\infty |x|^p\phi^2(x)\,dx +
		\frac{1}{2}\int_{-\infty}^\infty (\phi'(x))^2\,dx \right\}. \]
\end{theorem}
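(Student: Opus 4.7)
The natural strategy is to apply the exponential Tauberian theorem (Theorem \ref{t.tauberian}) to reduce the small-ball statement to an equivalent Laplace transform asymptotic; specifically, with $\alpha = 2/p$ and $\beta=0$, it suffices to establish
\[
\log\E\left[\exp\left(-\lambda\int_0^\infty \rho(s)^p|B(s)|^p\,ds\right)\right] \sim -c_p\, \lambda^{2/(p+2)} \int_0^\infty \rho(s)^{2p/(p+2)}\,ds
\]
as $\lambda\to\infty$, after which the Tauberian constants repackage $c_p$ into the stated $\kappa_p$. I would work exclusively with the Laplace transform from here on.

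The left side is naturally analyzed via the Feynman--Kac formula, which identifies it with a matrix element of the time-inhomogeneous semigroup generated by the Schr\"odinger operator $H_\lambda(s) = -\tfrac{1}{2}\partial_x^2 + \lambda\rho(s)^p|x|^p$. For a \emph{constant} weight $\rho\equiv r$ on an interval of length $\tau$, the Brownian scaling $B(s) = \lambda^{-1/(p+2)}\widetilde{B}(\lambda^{2/(p+2)}s)$ converts the potential to the canonical form $-\tfrac{1}{2}\partial_x^2 + |x|^p$, whose principal eigenvalue $\lambda_1(p)$ governs the large-$\lambda$ decay rate. The single-interval log-Laplace transform is then asymptotic to $-\lambda_1(p)\, r^{2p/(p+2)}\,\tau\,\lambda^{2/(p+2)}$, modulo lower-order corrections.

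For general $\rho$, I would partition $[0,\infty)$ into short subintervals on which $\rho$ is approximately constant. Hypotheses (i)--(iv) make this feasible: (i) and (ii) keep $\rho^{2p/(p+2)}$ tame near $0$ and $\infty$, (iii) controls the middle region, and (iv) ensures that the resulting Riemann sums converge to $\int_0^\infty \rho^{2p/(p+2)}\,ds$. On each subinterval I would sandwich $\rho$ between its sup and inf and use the Markov property to iterate the constant-weight estimate, transferring the eigenvalue asymptotic across endpoints via spectral estimates on the Feynman--Kac propagator --- essentially bounding it above and below by constant multiples of the ground-state projection of the canonical operator. Summing over the partition produces a Riemann sum converging to $\lambda_1(p)\,\lambda^{2/(p+2)}\!\int_0^\infty \rho^{2p/(p+2)}\,ds$ as the mesh shrinks.

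The principal obstacle is that Brownian segments on adjacent intervals are not independent, so the factorization across the partition is only approximate; the endpoint errors must be controlled uniformly in $\lambda$ using the rapid decay of the ground-state eigenfunction of $-\tfrac{1}{2}\partial_x^2 + |x|^p$. A secondary challenge is handling the improper integration region: truncating the tail at some large $A$ and arguing that the remainder contributes only $o(\lambda^{2/(p+2)})$ uses hypothesis (ii), with the analogous near-zero control coming from (i). The matching lower bound is usually the harder direction and is typically obtained either by a Cameron--Martin shift or by restricting the expectation to a favorable event on which $|B(s)|$ is uniformly small on the effective support of $\rho$; ensuring the resulting constant matches the upper bound constant exactly (so that Tauberian inversion yields the specific combinatorial factor in $\kappa_p$) is where the calculation becomes delicate.
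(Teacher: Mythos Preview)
The paper does not prove this theorem at all: it is quoted verbatim from \cite{Li2001} and used as a black box to verify Assumption~\ref{a.a} for clocks of the form $C(t)=\int_0^t \tilde\rho(s)^p|B(s)|^p\,ds$. There is therefore no ``paper's own proof'' to compare against.

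That said, your outline is broadly the right shape for how such results are established in the literature (including in \cite{Li2001}): reduce via the exponential Tauberian theorem to a Laplace transform asymptotic, use Brownian scaling to bring out the canonical Schr\"odinger operator $-\tfrac12\partial_x^2+|x|^p$ and its ground-state eigenvalue $\lambda_1(p)$, and then pass from piecewise-constant $\rho$ to general $\rho$ by a Riemann-sum argument controlled by hypotheses (i)--(iv). The honest gaps you flag---endpoint/gluing errors across the partition and the tail truncation near $0$ and $\infty$---are exactly where the work lies, and your sketch does not actually carry them out. In particular, the lower bound is not obtained by a naive Cameron--Martin shift; one typically needs a more careful two-sided spectral estimate (or the explicit large-time asymptotics of the Feynman--Kac semigroup) to get the matching constant. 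If you want to turn this into a proof, that is the step to write out in full.
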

For example, if $\tilde{\rho}$ is any non-negative continuous function
on $[0,\infty)$ and
\[ C(t) = \int_0^t \tilde{\rho}(s)^p|B(s)|^p\,ds, \]
then
\[ \sum_{i=1}^m d_i\Delta_i C
	= \sum_{i=1}^m d_i \int_{t_{i-1}}^{t_i} \tilde{\rho}(s)^p|B(s)|^p\,ds \]
and applying Theorem \ref{t.6.4} with $\rho(s)=\sum_{i=1}^m
d_i^{1/p} 1_{(t_{i-1},t_i]}(s) \tilde{\rho}(s)$ gives (\ref{e.PC}) with
$\alpha=2/p$, $\beta=0$, and
\[ K(t_{i-1},t_i)= \left(\int_{t_{i-1}}^{t_i}
		\tilde{\rho}(s)^{2p/(p+2)}\,ds\right)^{(2+p)/p}. \]

A particularly relevant example to our later applications is the simplest
case where $p=2$ and $\tilde{\rho}\equiv 1$, for which
\begin{equation}
\label{e.basic}
C(t) = \int_0^t B(s)^2\,ds,
\end{equation}
$\kappa_2=1/8$, and $K(t_{i-1},t_i)=(\Delta_i t)^2$ where $\Delta_i
t:=t_i-t_{i-1}$.

See Section 6 of \cite{LiShao2001} for more results related to Theorem
\ref{t.6.4}. Additionally, Chapter 7.3 of \cite{LifshitsLinde2002} contains these results under weaker assumptions. Known small deviations for weighted
$L^p$ norms of other stochastic processes
provide other interesting examples.  For example, in
\cite{LifshitsLinde2005} a result analogous to Theorem \ref{t.6.4} is
proved for fractional Brownian motions.  See this and related references for
further examples.

Now working under Assumption \ref{a.a}, one may prove the
following.

\begin{theorem}
\label{t.SB}
Suppose that $\{Z(t)\}_{t\ge0}$ is a stochastic process given by
$Z(t)=B(C(t))$, where $C$ is as in Assumption \ref{a.a} and
$B$ is a standard real-valued Brownian motion independent
of $C$.
Let $M(t):=\sup_{0\le s\le t} |Z(s)|$.  Then, for any $m\in\mathbb{N}$, $0=t_0<t_1<\cdots<t_m<\infty$, and $0\le
a_1<b_1\le a_2<b_2\le \cdots \le a_m<b_m$,
\begin{multline*}
    \lim_{\varepsilon\downarrow 0}
        \varepsilon^{2\alpha/(1+\alpha)}|\log \varepsilon|^{\beta/(1+\alpha)}
		\log \P\left(\bigcap_{i=1}^m
                            \{a_i\varepsilon\leq M(t_i)\leq b_i\varepsilon\}
                        \right) \\
                    =
                        -2^{-\beta/(1+\alpha)}(1+\alpha)^{1+\beta/(1+\alpha)}
		\left(\frac{\pi^2}{8\alpha}\right)^{\alpha/(1+\alpha)}
		\sum_{i=1}^m \left(\frac{K(t_{i-1},
		t_i)}{b_i^{2\alpha}}\right)^{1/(1+\alpha)}.
\end{multline*}
\end{theorem}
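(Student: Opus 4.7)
The plan is to separate the two sources of randomness and then reduce the small-deviation problem to the Laplace-transform asymptotic \eqref{e.EC} provided by Assumption \ref{a.a}. Since $C$ is continuous, non-decreasing, and $C(0)=0$, the image $C([0,t])$ equals $[0,C(t)]$, so
\[ M(t_i)=\sup_{0\le s\le t_i}|B(C(s))|=\sup_{0\le u\le C(t_i)}|B(u)|=:\mathcal{M}(C(t_i)). \]
Writing $s_i:=C(t_i)$ and conditioning on $\sigma(C)$, the independence of $B$ and $C$ yields
\[ \P\Bigl(\bigcap_{i=1}^m\{a_i\varepsilon\le M(t_i)\le b_i\varepsilon\}\Bigr)=\E\bigl[F(s_1,\ldots,s_m;\varepsilon)\bigr], \]
where $F(s;\varepsilon):=\P\bigl(a_i\varepsilon\le\mathcal{M}(s_i)\le b_i\varepsilon,\ i=1,\ldots,m\bigr)$ is now a deterministic function of the point sequence $s=(s_1,\ldots,s_m)$.

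Next I would establish the following two-sided small-deviation bound for $F$: for every $0=s_0<s_1<\cdots<s_m$ and every $\delta>0$, there exist $c_\delta,C_\delta>0$ and $N\in\mathbb{N}$ so that for all sufficiently small $\varepsilon>0$,
\[ c_\delta\varepsilon^{N}\exp\Bigl(-(1+\delta)\frac{\pi^2}{8\varepsilon^2}\sum_{i=1}^m\frac{s_i-s_{i-1}}{b_i^2}\Bigr)\le F(s;\varepsilon)\le C_\delta\exp\Bigl(-(1-\delta)\frac{\pi^2}{8\varepsilon^2}\sum_{i=1}^m\frac{s_i-s_{i-1}}{b_i^2}\Bigr). \]
For the upper bound I would drop the $a_i\varepsilon\le\mathcal{M}(s_i)$ constraints and iterate the classical small-deviation estimate $\P(\mathcal{M}(t)\le r)\sim(4/\pi)\exp(-\pi^2 t/(8r^2))$ via the strong Markov property at each $s_{i-1}$; since $|B(s_{i-1})|\le b_{i-1}\varepsilon<b_i\varepsilon$, the continuation on $[s_{i-1},s_i]$ is trapped in an interval of width $2b_i\varepsilon$ containing its starting point, whose leading Dirichlet eigenvalue is exactly $\pi^2/(8b_i^2\varepsilon^2)$. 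For the lower bound I would, segment by segment, construct a template path on which $B$ both exceeds $a_i\varepsilon$ and stays below $b_i\varepsilon$ on $(s_{i-1},s_i]$; because $a_i<b_i$ strictly, the cost of this template relative to pure containment is only polynomial in $\varepsilon$.

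Setting $d_i:=\pi^2/(8b_i^2)$ and averaging over $C$, the display becomes
\[ c_\delta\varepsilon^N\E\Bigl[\exp\Bigl(-(1+\delta)\varepsilon^{-2}\sum_i d_i\Delta_i C\Bigr)\Bigr]\le\P\Bigl(\bigcap_i\{a_i\varepsilon\le M(t_i)\le b_i\varepsilon\}\Bigr)\le C_\delta\E\Bigl[\exp\Bigl(-(1-\delta)\varepsilon^{-2}\sum_i d_i\Delta_i C\Bigr)\Bigr]. \]
Applying \eqref{e.EC} with $\lambda=(1\mp\delta)/\varepsilon^2$ and using $\log\lambda=2|\log\varepsilon|+O(1)$ so that $(\log\lambda)^{\beta/(1+\alpha)}\sim 2^{\beta/(1+\alpha)}|\log\varepsilon|^{\beta/(1+\alpha)}$, the normalization on the left-hand side of \eqref{e.EC} produces the prefactor $2^{-\beta/(1+\alpha)}$ on the right of the target identity; combining $\alpha^{-\alpha/(1+\alpha)}$ with the $(\pi^2/8)^{\alpha/(1+\alpha)}$ extracted from $d_i^{\alpha/(1+\alpha)}$ yields the factor $(\pi^2/(8\alpha))^{\alpha/(1+\alpha)}$. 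The $\varepsilon^N$ factor contributes only $N\log\varepsilon$, which is negligible relative to the normalization $\varepsilon^{-2\alpha/(1+\alpha)}|\log\varepsilon|^{-\beta/(1+\alpha)}$. Sending $\delta\downarrow 0$ closes the gap and yields the stated limit.

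The main obstacle is the deterministic conditional estimate. The upper bound is delicate because the shift $B(s_{i-1})$ is comparable in magnitude to the confining half-width $b_i\varepsilon$, so care is required to ensure the exponential rate is not weakened from $\pi^2/(8b_i^2\varepsilon^2)$; spectral expansion of the Brownian semigroup killed at exit from $[-b_i\varepsilon,b_i\varepsilon]$ is the natural tool, and the strict inequality $b_{i-1}<b_i$ keeps the leading eigenfunction cosine factor bounded away from zero. The lower bound is technically more involved, as the $a_i$-exceedance and $b_i$-containment events must be woven together across all segments without incurring an exponential penalty; the strict separation $a_i<b_i$ and the monotonicity of $(a_i),(b_i)$ are used essentially here.
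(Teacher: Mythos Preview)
Your upper bound is essentially the paper's: condition on $C$, drop the $a_i$-constraints, decouple across segments, and reduce to the Laplace transform \eqref{e.EC}. The paper makes the ``starting-point shift'' step clean via Anderson's inequality together with the two-sided bound \eqref{e.bmb}, which gives a constant $(4/\pi)^m$ and exponent with $\delta=0$ valid for \emph{all} $\varepsilon>0$ and \emph{all} realizations of $C$; you should note that some device of this kind is needed, since your spectral-expansion remainder must also be controlled uniformly in $s$.

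The lower bound, however, has a real gap. Your two-sided conditional estimate is asserted ``for every $0=s_0<s_1<\cdots<s_m$'' with constants $c_\delta,N$ and a threshold on $\varepsilon$; but these necessarily depend on $s$. Indeed, if $s_1$ (or any gap $s_i-s_{i-1}$) is small compared to $\varepsilon^2$, the exceedance event $\mathcal{M}(s_i)\ge a_i\varepsilon$ is itself exponentially unlikely, so no bound of the form $c_\delta\varepsilon^N\exp\bigl(-(1+\delta)\tfrac{\pi^2}{8\varepsilon^2}\sum_i(s_i-s_{i-1})/b_i^2\bigr)$ can hold with $c_\delta$ uniform in $s$. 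Without uniformity you cannot pull $c_\delta$ and $\varepsilon^N$ outside the expectation over $C$, and the passage from your displayed sandwich to the Laplace-transform asymptotic breaks down.

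The paper avoids this by \emph{not} seeking a pointwise lower bound on $F(s;\varepsilon)$. Instead it bounds the conditional probability below by $\prod_{i}(f_i-g_i)\,P_C(|B(1)|\le\Delta_ib\,\delta\varepsilon/\sqrt{\Delta_iC})$ (via events that also pin $|Z(t_i)|$ small, plus Sidak's lemma), expands the product, and takes the expectation \emph{term by term}. Each term becomes a Laplace transform $\E\exp\bigl(-\tfrac{\pi^2}{8\varepsilon^2}\sum_i\Delta_iC/d_i(\delta)^2\bigr)$ with $d_i(\delta)\in\{b_i(1-\delta),a_i(1+\delta)\}$; because any term containing some $a_i(1+\delta)<b_i(1-\delta)$ has a strictly larger rate under \eqref{e.EC}, those terms are negligible \emph{after} averaging, and the leading $\prod_i f_i$ term gives the claimed constant upon letting $\delta\downarrow0$. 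The comparison that makes the $a_i$-exceedance ``only polynomial'' is thus carried out at the level of expectations, not pathwise; this is the missing idea in your proposal.
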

Such estimates have been previously studied for processes $\{Z_t\}_{t\ge0}$ that are
symmetric $\alpha$-stable processes \cite{ChenKuelbsLi2000}, fractional
Brownian motions \cite{KuelbsLi2002}, certain stochastic
integrals \cite{KuelbsLi2005}, $m$-fold integrated Brownian
motions \cite{ZhangLin2006}, and integrated $\alpha$-stable
processes \cite{ZhangLin2010}.  In particular, the stochastic integrals studied in
\cite{KuelbsLi2005} are essentially finite-dimensional versions of
the class of stochastic integral processes we study, and the proof that we
give for Theorem \ref{t.SB} follows the outline of the proof of small ball
estimates in that reference.

We apply Theorem \ref{t.SB} to stochastic integrals of the form (\ref{e.zt}) as follows.
\begin{theorem}
\label{t.introz}
Let $\{Z(t)\}_{t\ge0}$ be as in equation (\ref{e.zt}).
Then $\{Z(t)\}\overset{d}{=}\{B(C(t))\}$ for $B$ a standard real-valued
Brownian motion and
\[ C(t) = 
\sum_{k=1}^\infty \|\omega(e_k,\cdot)\|_{\mathcal{H}}^2 \int_0^t
(W_s^k)^2\,ds
\]
where $\{e_k\}_{k=1}^\infty$ is any orthonormal basis of $\mathcal{H}$
contained in $\mathcal{H}_*:= \{h\in\mathcal{H}:\langle h,\cdot\rangle$ extends to a continuous linear functional on $\mathcal{W}\}$
and $\{W^k\}_{k=1}^\infty$ are independent standard Brownian motions which
are also independent of $B$.
If we further suppose that
$\|\omega(e_k,\cdot)\|_{\mathcal{H}}=O(k^{-r})$ for
$r>1$,
then, for any $m\in\mathbb{N}$,
$0=t_0<t_1<\cdots<t_m$, and $\{d_i\}_{i=1}^m\subset(0,\infty)$ a decreasing
sequence,
\[ \lim_{\varepsilon\downarrow0} \varepsilon \log\P\left(\sum_{i=1}^m
		d_i^2 \Delta_i C\le\varepsilon \right)
	= -\frac{1}{2}\|\omega\|_1^2 \left(\sum_{i=1}^m d_i\Delta_it\right)^2, \]
where $\Delta_it:=t_i-t_{i-1}$ and
\[ \|\omega\|_1 := \sum_{k=1}^\infty \|\omega(e_k,\cdot)\|_{\mathcal{H}}
<\infty. \]
Thus, for any $0\leq a_1<b_1\leq a_2<b_2\leq\cdots\leq a_m<b_m$,
    \begin{equation*}
      \lim_{\varepsilon\downarrow0}
            \varepsilon\log P\left( \bigcap_{i=1}^m
                               \{ a_i\varepsilon\leq \sup_{0\leq s\leq
t_i}|Z_s|\leq b_i\varepsilon \}                            \right)
                        =
                            -\frac{\pi}{4}\|\omega\|_1\sum_{i=1}^m\frac{\Delta_i
t}{b_i}.
    \end{equation*}
\end{theorem}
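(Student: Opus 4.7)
The assertion has three ingredients, which I would prove in sequence.

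\emph{Step 1 --- distributional identity.} I would begin by passing to an orthonormal basis $\{e_k\}\subset\mathcal{H}_*$ adapted to $\omega$: since each $\omega(e_k,\cdot)\in\mathcal{H}^*$, the form $\omega$ on $\mathcal{H}$ is represented by a bounded antisymmetric operator, and the spectral theorem for such operators furnishes a basis in which $\omega$ is block-diagonal with $2\times 2$ blocks $\bigl(\begin{smallmatrix}0&\lambda_j\\-\lambda_j&0\end{smallmatrix}\bigr)$. In this basis $Z_t = \sum_j \lambda_j A^{(j)}_t$, where $A^{(j)}_t := \int_0^t(W_s^{2j-1}\,dW_s^{2j}-W_s^{2j}\,dW_s^{2j-1})$ is the L\'evy area of the two-dimensional Brownian motion $(W^{2j-1},W^{2j})$. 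The classical polar-coordinate argument gives $A^{(j)}_t = \beta^{(j)}(C^{(j)}_t)$ with $C^{(j)}_t = \int_0^t((W^{2j-1}_s)^2+(W^{2j}_s)^2)\,ds$ and $\beta^{(j)}$ a standard Brownian motion independent of $C^{(j)}$. Since the pairs are independent across $j$, $Z$ is, conditionally on $\sigma(C^{(j)}:j)$, a sum of independent Gaussian martingales whose total quadratic variation is $C_t := \sum_j \lambda_j^2 C^{(j)}_t = \sum_k \|\omega(e_k,\cdot)\|_{\mathcal{H}^*}^2 \int_0^t (W^k_s)^2\,ds$. A conditional Dambis--Dubins--Schwarz argument then produces a standard Brownian motion $B$ independent of $C$ with $Z = B\circ C$.

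\emph{Step 2 --- small deviations of $\sum_i d_i^2\Delta_i C$.} Writing $a_k := \|\omega(e_k,\cdot)\|_{\mathcal{H}^*}^2$ and $Y_k := \sum_i d_i^2\int_{t_{i-1}}^{t_i}(W^k_s)^2\,ds$, one has $\sum_i d_i^2\Delta_i C = \sum_k a_k Y_k$ with $\{Y_k\}$ i.i.d. Applying Theorem~\ref{t.6.4} to a single $Y_k$ and then invoking the exponential Tauberian theorem (in the form (\ref{e.EC})) gives
\[ \log \E[\exp(-\lambda Y_k)] \sim -c\Bigl(\sum_i d_i\Delta_i t\Bigr)\sqrt{\lambda}\qquad (\lambda\to\infty) \]
for an explicit constant $c$. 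By independence, for each finite $N$,
\[ \log\E\Bigl[\exp\Bigl(-\lambda\sum_{k=1}^N a_k Y_k\Bigr)\Bigr] = \sum_{k=1}^N \log\E[\exp(-\lambda a_k Y_k)] \sim -c\Bigl(\sum_i d_i\Delta_i t\Bigr)\sqrt{\lambda}\sum_{k=1}^N \sqrt{a_k}. \]
The decay hypothesis on $\|\omega(e_k,\cdot)\|_{\mathcal{H}^*}$ is then invoked to guarantee $\|\omega\|_1 = \sum_k\sqrt{a_k} < \infty$ and to supply uniform-in-$\lambda$ control on the tail $k>N$, permitting passage to $N=\infty$. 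Applying the Tauberian theorem in reverse recovers the claimed small-ball asymptotic.

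\emph{Step 3 --- supremum estimate.} Since Step~2 holds for every admissible $(m,\{d_i\},\{t_i\})$, Assumption~\ref{a.a} is verified for $C$ with $\alpha=1$, $\beta=0$, and $K(t_{i-1},t_i)$ a fixed multiple of $\|\omega\|_1^2(\Delta_i t)^2$. Together with the representation of Step~1, Theorem~\ref{t.SB} applies directly, and its right-hand side collapses to the stated expression after elementary algebra.

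\emph{Main obstacle.} The delicate step is the simultaneous interchange of the infinite sum over $k$ and the limit $\lambda\to\infty$ in Step~2. The pointwise Laplace asymptotic for each $Y_k$ is an immediate consequence of Theorem~\ref{t.6.4} and Tauberian arguments, but summing over $k$ requires quantitative (not merely asymptotic) estimates on $\log\E[\exp(-\lambda a_k Y_k)]$ that are uniform in $k$. The two decay hypotheses (polynomial with $r>1$, or geometric with ratio $\sigma\in(0,1)$) are precisely what is needed both to ensure $\|\omega\|_1<\infty$ and to bound the truncation error in $N$ uniformly, first for the Laplace transform as $\lambda\to\infty$ and then for the small-ball probability as $\varepsilon\downarrow 0$.
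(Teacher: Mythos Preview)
Your three-step structure mirrors the paper's organization; two of the three steps diverge in execution.

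For Step~1, the paper (Theorem~\ref{t.Representation}) takes a finite-dimensional approximation route rather than your direct one: each projection $Z^n = \int_0^t \langle (P_n Q P_n) P_n W_s, dP_n W_s\rangle$ already has the desired representation by the known finite-dimensional result (Lemma~\ref{l.kuelbsLi3.3}), and then one shows separately that $Z^n\Rightarrow Z$ and that $\tilde Z^n\to\tilde Z$ in finite-dimensional distributions, using Hilbert--Schmidt convergence $P_nQP_n\to Q$ and the min--max theorem to control the eigenvalues. Your argument via the spectral basis and conditional Gaussianity is cleaner, but note a small gap: the eigenvectors of $Q$ need not lie in $\mathcal{H}_*$, so the coordinates $W^{2j-1},W^{2j}$ you invoke are not a priori of the form $\langle W,e\rangle$ for $e\in\mathcal{H}_*$. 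One can repair this by rotating in the i.i.d.\ Gaussian sequence $\{\langle W,e_k\rangle\}_k$ (rotation invariance in $\ell^2$ makes the spectral representation valid in distribution regardless), which is essentially what the paper's approximation argument handles implicitly.

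For Step~2, the paper does not perform the uniform-in-$k$ interchange you describe; instead it invokes as black boxes two existing results on small deviations of $\sum_k a_k\zeta_k$ for i.i.d.\ nonnegative $\zeta_k$: Theorem~\ref{t.BR3.1} (polynomial decay of $a_k$) and Theorem~\ref{t.A8} (geometric decay). The two decay hypotheses in the statement are tailored precisely to match the assumptions of those theorems. Your Step~2 is thus a sketch of what those references establish, and the ``main obstacle'' you correctly identify is exactly their content; the paper simply cites them rather than reproducing the argument.

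Step~3 is identical to the paper's.
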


\begin{remark}
Note that in the above theorem, and in the sequel, we make the standard identification between $\mathcal{H}^*$ and $\mathcal{H}$ via Riesz representation.  That is, for a linear functional $\varphi$ on $\mathcal{H}$, we write
\[ \|\varphi\|_\mathcal{H}^2 = \sum_{j=1}^\infty |\langle \varphi,e_j\rangle_\mathcal{H}|^2
	= \sum_{j=1}^\infty |\varphi(e_j)|^2. \]
In particular, for fixed $h\in\mathcal{H}$, we write $\|\omega(h,\cdot)\|_\mathcal{H}^2 := \|\omega(h,\cdot)|_\mathcal{H}\|_\mathcal{H}^2=\sum_{j=1}^\infty |\omega(h,e_j)|^2$.
\end{remark}

Applications of such estimates include using the small deviations in Theorem \ref{t.introz} to prove a
Chung-type law of iterated logarithm as well as a functional law
of iterated logarithm for the process $Z$.  We record these results in
Theorem \ref{t.chungLIL} and \ref{t.FLIL}.

\subsection{Discussion}
First-order small deviation estimates of the standard form
\[
\log \P\left(\sup_{0\le s\le t} |Z(s)| \le \varepsilon\right)
\]
were studied in \cite{Remillard1994} for processes $Z(t)=\int_0^t
\omega(W_s,dW_s)$  with $W$ an $n$-dimensional Brownian motion and
$\omega:\mathbb{R}^n\rightarrow\mathbb{R}$ given by $\omega(x,y)=Ax\cdot
y$ for $A$ a skew-symmetric $n\times n$ matrix.  These estimates
were then applied to prove an analogue of the classical limit result
of Chung.  (This was done earlier in \cite{Shi1994} in the case
$n=2$ and $A=\begin{pmatrix} 0 &1/2\\-1/2&0\end{pmatrix}$, that is,
for $Z$ the stochastic L\'evy area.) In \cite{KuelbsLi2005}, the
authors improved these results by proving stronger asymptotic
results like those in Theorem \ref{t.SB} for the same $Z$ as in
\cite{Remillard1994} and
applying these results to prove a functional law of iterated
logarithm.

In the present paper, the proof of the small ball estimates established
in Theorem \ref{t.SB} is a direct generalization of the techniques of
\cite{KuelbsLi2005}.  However, Theorem \ref{t.SB} is sufficiently
general to be of independent interest for other potential applications.
Thus for that purpose, as well as for clarity and self-containment,
we include the proof here.  It is also clear from the proofs that, given only the
asymptotic order for $C$, one could infer the asymptotic order for
$Z$ instead.

We also mention the reference \cite{AurzadaLifshits2009}, in which
the authors study general iterated processes of the form $X\circ
Y$ where $X$ is a two-sided self-similar process and $Y$ is a
continuous process independent of $X$.  Since $X$ is two-sided, it
is not required that $Y$ satisfy any monotonicity or positivity
criteria.  In this general setting, under the assumption that the
first-order ($m=1$) asymptotics are known for $X$ and $Y$, the authors
are able to prove a first-order small ball estimate (Theorem
4 of \cite{AurzadaLifshits2009}).  Theorem \ref{t.SB} is stated in
the restricted setting that $X$ is a Brownian motion; however, the
proof carries through for first-order estimates
for processes $X$ satisfying more general assumptions
(as in \cite{AurzadaLifshits2009}).  See Proposition \ref{p.au} for
more details.

The organization of the paper is as follows.  In Section \ref{s.SB}
we give the proof of Theorem \ref{t.SB}.  In Section \ref{s.defZ},
we apply Theorem \ref{t.SB} to prove small ball estimates for the
relevant collection of stochastic integrals.  In Section \ref{s.defZ},
we define precisely the processes of interest, and in Theorem
\ref{t.Representation} we prove that these processes have a
representation as Brownian motions on an independent random clock.
In Subsection \ref{ss.smallBall}, we determine the small ball asymptotics
of the clock.  Thus we are able to apply Theorem \ref{t.SB}, and
we additionally record a Chung-type law of iterated logarithm and
functional law of iterated logarithm that follow from these estimates.

{\it Acknowledgement.} This paper is dedicated to the memory of Wenbo Li, who
suggested the problems addressed in Section \ref{s.defZ} of this paper, thus
motivating the whole of this work.

The authors would also like to thank an anonymous referee for careful reading and several useful comments to improve this paper.

\section{Small deviation estimates}
\label{s.SB}

In this section, we prove separately the upper and lower bounds of Theorem
\ref{t.SB}. The outline of the proof here follows Section 4 of
\cite{KuelbsLi2005}.
First, we record a standard
relation between asymptotics of the Laplace transform and small
ball estimate of a positive random variable in the form of
the exponential Tauberian theorem (see for
example Theorem 4.12.9 in \cite{BGT1987}).
We give a special case of that theorem here.
\begin{theorem}
\label{t.tauberian}
Suppose that $X$ is a positive random variable.  There exist $\alpha>0$,
$\beta\in\mathbb{R}$, and $K\in(0,\infty)$ such that
\begin{equation*}
\lim_{\varepsilon\downarrow 0}
	\varepsilon^{\alpha}|\log\varepsilon|^{\beta} \log \P(X \le \varepsilon)
	= - K
\end{equation*}
if and only if
\[ \lim_{\lambda\rightarrow\infty}
	\lambda^{-\alpha/(1+\alpha)}(\log\lambda)^{\beta/(1+\alpha)}
	\log\E[e^{-\lambda X}]
	= -(1+\alpha)^{1+\beta/(1+\alpha)} (\alpha^{-\alpha}K)^{1/(1+\alpha)}.
\]
\end{theorem}
We will use this theorem repeatedly in the sequel along with the standard fact
that, for any $\varepsilon>0$,
\begin{equation}
\label{e.bmb}
    \frac{2}{\pi}e^{-\frac{\pi^2}{8\varepsilon^2}}
        \leq \P\left(
                \sup_{0\leq s\leq 1}|B(s)|\leq \varepsilon
            \right)
        \leq\frac{4}{\pi}e^{-\frac{\pi^2}{8\varepsilon^2}},
\end{equation}
see for example \cite{Chung1948}.
Now the upper bound of Theorem \ref{t.SB} follows almost immediately
from this and the upper bound for the random clock $C$ via conditioning.

\begin{notation}
For $C$ as in Theorem \ref{t.SB}, we let $P_C(\cdot)=\P(\cdot\mid C)$.
\end{notation}

\begin{prop}\label{p.kuelbsLiProp1}
Under the hypotheses of Theorem \ref{t.SB}, we have that
\begin{multline*}
    \limsup_{\varepsilon\downarrow 0}
        \varepsilon^{2\alpha/(1+\alpha)}|\log \varepsilon|^{\beta/(1+\alpha)}
		\log \P\left(\bigcap_{i=1}^m
                            \{a_i\varepsilon\leq M(t_i)\leq b_i\varepsilon\}
                        \right) \\
                    \leq
                        -2^{-\beta/(1+\alpha)}(1+\alpha)^{1+\beta/(1+\alpha)}
		\left(\frac{\pi^2}{8\alpha}\right)^{\alpha/(1+\alpha)}
		\sum_{i=1}^m \left(\frac{K(t_{i-1},
		t_i)}{b_i^{2\alpha}}\right)^{1/(1+\alpha)}.
\end{multline*}
\end{prop}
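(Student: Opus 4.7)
The plan is to condition on the clock $C$ and reduce the joint small-ball event to a product of interval-survival probabilities for $B$, to which the Tauberian asymptotic (\ref{e.EC}) can be applied after integrating over $C$. Since $C$ is continuous, non-decreasing, with $C(0)=0$, we have $M(t_i)=\sup_{0\le u\le C(t_i)}|B(u)|$. Using that $b_1\le b_2\le\cdots\le b_m$, one checks the identity
\[
\bigcap_{i=1}^m\{M(t_i)\le b_i\varepsilon\}=\bigcap_{i=1}^m A_i,\qquad A_i:=\Bigl\{\sup_{C(t_{i-1})\le u\le C(t_i)}|B(u)|\le b_i\varepsilon\Bigr\},
\]
since on $\bigcap_j A_j$ one has $\sup_{0\le u\le C(t_i)}|B(u)|=\max_{j\le i}\sup_{[C(t_{j-1}),C(t_j)]}|B(u)|\le\max_{j\le i}b_j\varepsilon=b_i\varepsilon$, and the reverse containment is immediate.

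Next, work conditionally on $C$ and apply the strong Markov property of $B$ iteratively at the times $C(t_{m-1}),\ldots,C(t_1)$. The essential input is the uniform survival estimate
\[
\P\Bigl(\sup_{0\le s\le T}|x+B(s)|\le b_i\varepsilon\Bigr)\le C_0\exp\Bigl(-\frac{\pi^2 T}{8b_i^2\varepsilon^2}\Bigr)\qquad\text{for all }|x|\le b_i\varepsilon,
\]
with an absolute constant $C_0$. By translation, the left side is the probability that a Brownian motion from $0$ stays in an interval of width $L=2b_i\varepsilon$; the principal Dirichlet eigenvalue $\pi^2/(2L^2)=\pi^2/(8b_i^2\varepsilon^2)$ depends only on $L$, and the symmetric ($x=0$) case of (\ref{e.bmb}) together with standard spectral/log-concavity comparison bounds the nonsymmetric case by a multiple of the symmetric one. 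Since on $A_1\cap\cdots\cap A_{i-1}$ one has $|B(C(t_{i-1}))|\le b_{i-1}\varepsilon\le b_i\varepsilon$, iterating Markov yields
\[
P_C\Bigl(\bigcap_{i=1}^m A_i\Bigr)\le C_0^m\exp\Bigl(-\frac{\pi^2}{8\varepsilon^2}\sum_{i=1}^m\frac{\Delta_i C}{b_i^2}\Bigr).
\]

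Taking expectation over $C$ and feeding this into (\ref{e.EC}) with $d_i:=1/b_i^2$ (which is decreasing since $b_i$ is strictly increasing by the hypothesis $b_i\le a_{i+1}<b_{i+1}$) and $\lambda:=\pi^2/(8\varepsilon^2)$ completes the argument. Since $\log\lambda=2|\log\varepsilon|+O(1)$ as $\varepsilon\downarrow 0$, the scaling factors $\lambda^{\alpha/(1+\alpha)}$ and $(\log\lambda)^{-\beta/(1+\alpha)}$ on the Laplace-transform side combine with the target scaling $\varepsilon^{2\alpha/(1+\alpha)}|\log\varepsilon|^{\beta/(1+\alpha)}$ to produce the factor $(\pi^2/8)^{\alpha/(1+\alpha)}2^{-\beta/(1+\alpha)}$; the identity $(\pi^2/8)^{\alpha/(1+\alpha)}\alpha^{-\alpha/(1+\alpha)}=(\pi^2/(8\alpha))^{\alpha/(1+\alpha)}$ then absorbs the $\alpha^{-\alpha/(1+\alpha)}$ appearing in (\ref{e.EC}), yielding exactly the constant in the stated upper bound.

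The main obstacle is the uniform-in-starting-point interval-survival estimate. Any naive application of Markov through an oscillation bound such as $|B(u)-B(C(t_{i-1}))|\le 2b_i\varepsilon$ would replace $b_i\varepsilon$ with $2b_i\varepsilon$ inside the exponent and give only the decay rate $\pi^2/(32\varepsilon^2)$, losing a factor of $4^{\alpha/(1+\alpha)}$ in the final constant. Exploiting instead that the Dirichlet spectral gap on an interval depends only on its width, so that shifting the initial position inside the confining window does not degrade the exponential rate, is precisely what is needed to match the sharp constant $\pi^2/(8\alpha)$.
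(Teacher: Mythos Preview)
Your proof is correct and follows essentially the same route as the paper: drop the lower constraints, condition on $C$, peel off the blocks $[C(t_{i-1}),C(t_i)]$ via the Markov property of $B$, bound each block by $\exp(-\pi^2\Delta_i C/(8b_i^2\varepsilon^2))$, take expectation, and invoke (\ref{e.EC}) with $d_i=1/b_i^2$. The only substantive difference is the justification of the ``shifted starting point does not help'' step: the paper appeals to Anderson's inequality (the conditional process $\{Z(s)-Z(t_{i-1})\}_{s\in[t_{i-1},t_i]}$ is centered Gaussian under $P_C$, and $\{\sup|\cdot|\le b_i\varepsilon\}$ is symmetric convex), which immediately reduces to the symmetric case covered by (\ref{e.bmb}); you instead invoke the uniform Dirichlet survival bound on the interval, which is equally valid but slightly less direct given the tools already quoted in the paper.
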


\begin{proof} We will show that
\begin{equation}\label{e.step1}
    \P\left(\bigcap_{i=1}^m
        \{a_i\varepsilon\leq M(t_i)\leq b_i\varepsilon\}
        \right)
        \leq\left(
                \frac{4}{\pi}
            \right)^m
            \mathbb{E}
                \left[
                    \exp\left(-\frac{\pi^2}{8\varepsilon^2}\sum_{i=1}^m\frac{\Delta_i{C}}{b_i^2}\right)
                \right].
\end{equation}
Then applying equation (\ref{e.EC}) with $d_i=1/b_i^2$ finishes the proof.
So first we define
\[
    A_i:=\left\{
            \sup_{t_{i-1}\leq s\leq t_i} \left|Z(s)\right|
        \leq
            b_i\varepsilon
        \right\}.
\]
Then we have that
\[ P_C\left(\bigcap_{i=1}^m
        \{a_i\varepsilon\leq M(t_i)\leq b_i\varepsilon\}
    \right)
    \leq P_C\left(
        \bigcap_{i=1}^m A_i
    \right), \]
and for $\mu_{C,t_{m-1}}(\cdot)=P_C\left(
                                Z(t_{m-1})\in\cdot
                            \right)$
\begin{align*}
 &   P_C\left(
        \bigcap_{i=1}^m A_i
    \right) \\
        &=   \int_\mathbb{R} P_C
            \left(
                \bigcap_{i=1}^{m-1} A_i,
                \sup_{t_{m-1}\leq s\leq t_m} | Z(s)-Z(t_{m-1})
                + x\big|\leq b_m\varepsilon\,\bigg|\, Z(t_{m-1})=x
            \right) \,d\mu_{C,t_{m-1}}(x)   \\
        &=  \int_\mathbb{R} P_C
            \left(\left.
                \bigcap_{i=1}^{m-1} A_i\,\right| Z(t_{m-1})=x
            \right) \\
        &\qquad\qquad\times P_C
            \left(
                \sup_{t_{m-1}\leq s\leq t_m}
                | Z(s)-Z(t_{m-1})
                + x|\leq b_m\varepsilon
            \right)\,d\mu_{C,t_{m-1}}(x)
     \end{align*}
since $\displaystyle \sup_{t_{m-1}\leq s\leq
t_m}|Z(s)-Z(t_{m-1})+x|$ is $P_C$ independent of
$Z(t_{m-1})$ and $\displaystyle \bigcap_{i=1}^{m-1}A_i$ by the $P_C$
independent increments of $Z.$

Since $\left\{Z(t)\right\}_{t\geq 0}$ is a $P_C$ Gaussian centered
process, we have by Anderson's inequality (see, for example Theorem 1.8.5 of
\cite{Bogachev1998}) that
\begin{align*}
    P_C\bigg(
            \sup_{t_{m-1}\leq s\leq t_m} &\left| Z(s)- Z(t_{m-1})+x\right|\leq b_m\varepsilon
        \bigg) \\
        &\leq   P_C\left(
                \sup_{t_{m-1}\leq s\leq t_m} \left| Z(s)- Z(t_{m-1})\right|\leq b_m\varepsilon
            \right) \\
            &=  P_C\left(
                    \sup_{0\leq s\leq 1} \left|
                                            B(s)
                                        \right|
                    \leq
                        \frac{b_m\varepsilon}{\sqrt{\Delta_m{C}}}
                \right),
    \end{align*}
by the monotonicity and continuity of $C$ and the stationary and scaling properties of Brownian motion. Thus
\[
    P_C\left(
            \bigcap_{i=1}^m A_i
        \right)
        \leq
            P_C\left(
                \bigcap_{i=1}^{m-1} A_i
            \right)
            P_C\left(
                    \sup_{0\leq s\leq 1}\left|
                                            B(s)
                                        \right|
                    \leq
                        \frac{b_m\varepsilon}{\sqrt{\Delta_m{C}}}
                \right).
\]
By iterating the above computation $m$ times we see that
\begin{align*}
    P_C\left(
        \bigcap_{i=1}^m A_i
    \right)
    &\leq
        \prod_{i=1}^m P_C\left(
                            \sup_{0\leq s\leq
1}|B(s)|\leq\frac{b_i\varepsilon}{\sqrt{\Delta_i{C}}}
                        \right) \\
        &\leq \left(\frac{4}{\pi}\right)^m
                \exp\left(-\frac{\pi^2}{8\varepsilon^2}\sum_{i=1}^m\frac{\Delta_i{C}}{b_i^2}\right)
\end{align*}
where the second inequality follows from the upper bound in (\ref{e.bmb}).
Taking the expectation of both sides yields \eqref{e.step1}.
\end{proof}


We now move towards obtaining the lower bounds with the following lemma.

\begin{lemma}\label{p.kuelbsLiProp2}
Fix $\gamma>0$, and let $0<\delta<\gamma$ be such that
$a_i(1+\delta)<b_i(1-\delta)$. Also let $f_i=f_i(\varepsilon,\delta)$ and
$g_i=g_i(\varepsilon,\delta)$ be given by
\[ f_i :=  P_C
                \left(
                    \sup_{0\leq s\leq 1}
                    |B(s)|\leq\frac{b_i(1-\delta)\varepsilon}{\sqrt{\Delta_i{C}}}
                \right) \text{ and }
 g_i := P_C
                \left(
                    \sup_{0\leq s\leq 1}
                    |B(s)|\leq\frac{a_i(1+\delta)\varepsilon}{\sqrt{\Delta_i{C}}}
                \right)\]
and set
\begin{equation*}
    \Phi:=\left\{ \phi=
        \{\phi_i\}_{i=1}^m:\phi_i\in\{f_i,g_i\}\,\mbox{and at least one $\phi_i=g_i$}
    \right\}.
\end{equation*}
Then
\begin{multline*}
      \P\left(\bigcap_{i=1}^m
        \{a_i\varepsilon\leq M(t_i)\leq b_i\varepsilon\}, |Z(t_m)|\leq b_m\gamma\varepsilon
    \right)  \\
    \geq  \mathbb{E}\left[
                    \prod_{i=1}^m f_iP_C\left(
                            |B(1)|\leq\frac{\Delta_i{b}\delta\varepsilon}{\sqrt{\Delta_i{C}}}
                        \right)
                \right]
                -\sum_{\phi\in\Phi}\mathbb{E}\left[
                        \prod_{i=1}^m \phi_i
                    \right],
\end{multline*}
where $\Delta_i{b}=b_i-b_{i-1}$ with $b_0=0$.
\end{lemma}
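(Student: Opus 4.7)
The plan is to introduce an auxiliary event $H$ that is stronger than the target event, and to lower-bound $\P(H)$ using $P_C$-independence of increments together with a Gaussian correlation inequality. For each $i$, set
\begin{align*}
F_i &:= \left\{\sup_{t_{i-1}\le s\le t_i} |Z(s) - Z(t_{i-1})| \le b_i(1-\delta)\varepsilon\right\},\\
G_i &:= \left\{\sup_{t_{i-1}\le s\le t_i} |Z(s) - Z(t_{i-1})| \le a_i(1+\delta)\varepsilon\right\},\\
D_i &:= \left\{|Z(t_i) - Z(t_{i-1})| \le \Delta_i b\cdot\delta\varepsilon\right\},
\end{align*}
so that $G_i\subseteq F_i$, $P_C(F_i)=f_i$, $P_C(G_i)=g_i$, and $P_C(D_i)=P_C(|B(1)|\le \Delta_i b\,\delta\varepsilon/\sqrt{\Delta_i C})$. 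Let $H := \bigcap_i(F_i\cap D_i\cap G_i^c)$.

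First I would verify $H$ is contained in the target event. Iterating the triangle inequality over $D_1,\dots,D_i$ (with $Z(0)=0$) yields $|Z(t_i)|\le\sum_{j\le i}\Delta_j b\cdot\delta\varepsilon = b_i\delta\varepsilon$, which in particular gives $|Z(t_m)|\le b_m\gamma\varepsilon$. Combined with $F_i$ and $b_{i-1}\le b_i$, this produces $\sup_{t_{i-1}\le s\le t_i}|Z(s)|\le b_i(1-\delta)\varepsilon+b_{i-1}\delta\varepsilon\le b_i\varepsilon$, and hence $M(t_i)\le b_i\varepsilon$ by induction. Similarly, $G_i^c$ combined with $b_{i-1}\le a_i$ gives $\sup_{t_{i-1}\le s\le t_i}|Z(s)|>a_i(1+\delta)\varepsilon-b_{i-1}\delta\varepsilon\ge a_i\varepsilon$, so $M(t_i)\ge a_i\varepsilon$.

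Since $G_i\subseteq F_i$, one can rewrite $H=\bigcap_i(F_i\cap D_i)\setminus\bigcup_i G_i$, and obtain
\[
P_C(H)\ge P_C\Bigl(\bigcap_i(F_i\cap D_i)\Bigr)-P_C\Bigl(\bigcap_i F_i\cap\bigcup_i G_i\Bigr).
\]
By $P_C$-independence of increments, the first probability factors as $\prod_i P_C(F_i\cap D_i)$; applying a Gaussian correlation inequality conditionally on $C$ (both $F_i$ and $D_i$ are symmetric convex sets in the path space on $[t_{i-1},t_i]$, and $D_i$ is a symmetric slab in a one-dimensional projection, so Khatri-\v{S}id\'{a}k suffices) yields $P_C(F_i\cap D_i)\ge f_i\, P_C(D_i)$. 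A union bound on the second probability gives $\sum_i g_i\prod_{j\ne i}f_j$, which is dominated by $\sum_{\phi\in\Phi}\prod_i\phi_i$ because the latter is a sum of non-negative terms containing each of the single-$g_i$ products. Taking expectations and using $\P(H)\le\P(\text{target event})$ completes the proof.

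The main obstacle is the Gaussian correlation step: since $F_i$ and $D_i$ depend on the same Brownian segment they are genuinely dependent, and the product lower bound requires a correlation inequality. Khatri-\v{S}id\'{a}k in its infinite-dimensional form (justifiable by finite-dimensional approximation of the supremum) delivers exactly this; the remaining decomposition and union bound are then routine.
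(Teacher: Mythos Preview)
Your proof is correct and follows essentially the same route as the paper: the auxiliary event $H=\bigcap_i(F_i\cap D_i\cap G_i^c)$ is exactly the paper's $\bigcap_i A_i$, and both arguments rest on $P_C$-independence of the increments together with a \v{S}id\'ak-type correlation inequality to separate off the endpoint constraint $D_i$.

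The organization differs only slightly. The paper passes through intermediate events $\Upsilon_i$ on the unshifted process, iterates $P_C(\bigcap_{j\le i}\Upsilon_j)\ge P_C(\bigcap_{j<i}\Upsilon_j)\,P_C(A_i)$ to reach $\prod_i P_C(A_i)$, applies \v{S}id\'ak in the form $P_C(A_i)\ge(f_i-g_i)\,P_C(D_i)$, and then expands $\prod_i(f_i-g_i)$ algebraically. You instead verify $H\subset\{\text{target}\}$ directly, peel off $\bigcup_i G_i$ at the set level via a union bound, and invoke \v{S}id\'ak only in its textbook form $P_C(F_i\cap D_i)\ge f_i\,P_C(D_i)$ (symmetric convex body intersected with a one-dimensional slab). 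This ordering is arguably cleaner, since it applies the correlation inequality to the convex event $F_i$ rather than to the annular event $F_i\setminus G_i$; the resulting bound is the same.
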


\begin{proof} Define
\[
    \Upsilon_i:=\left\{
            a_i\varepsilon\leq\sup_{t_{i-1}\leq s\leq t_i} \left|
                                                             Z(s)
                                                        \right|
                                                    \leq
                                                        b_i\varepsilon,
                                                        \left|
                                                             Z(t_i)
                                                        \right|
                                                    \leq
                                                        b_i\delta\varepsilon
    \right\}.
\]
Since $\displaystyle \sup_{t_{i-1}\leq s\leq t_i}
| Z(s)|\leq M(t_i)$ and $b_i\gamma\varepsilon\geq b_i\delta\varepsilon$
for all $i$, we have that
\[
    \bigcap_{i=1}^m \{
        a_i\varepsilon\leq  M(t_i)\leq b_i\varepsilon\}
           \cap \{| Z(t_m)|\leq b_m\gamma\varepsilon\}
        \supset
            \bigcap_{i=1}^m \Upsilon_i.
\]
Define
\begin{multline*}
    A_i:=\bigg\{
            a_i(1+\delta)\varepsilon
                \leq\sup_{t_{i-1}\leq s\leq t_i}\left|
                                                     Z(s)- Z(t_{i-1})
                                                \right|
                    \leq b_i(1-\delta)\varepsilon,  \\
            \left|
                 Z(t_i)- Z(t_{i-1})
            \right|
                \leq
                    (\Delta_i{b})\delta\varepsilon
        \bigg\}.
\end{multline*}
By $P_C$ independent increments, \begin{align*}
       P_C\left(
            \bigcap_{i=1}^m \Upsilon_i
        \right)
        &\geq P_C\left(
                    \bigcap_{i=1}^{m-1} \Upsilon_i\cap A_m
                \right)
        =     P_C\left(
                    \bigcap_{i=1}^{m-1} \Upsilon_i
                \right)P_C\left(
                            A_m
                        \right),
    \end{align*} and repeating this computation $m$ times gives that
    \[
        P_C\left(
            \bigcap_{i=1}^m \Upsilon_i
        \right)
            \geq \prod_{i=1}^m P_C\left(A_i\right).
    \]
Again we use the stationary and
scaling properties of Brownian motion, as well as \v{S}idak's Lemma (see for
example, Corollary 4.6.2 of \cite{Bogachev1998}), to show that
\begin{align*}
    P_C\left(A_i\right)
        &=P_C\left(
                \frac{a_i(1+\delta)\varepsilon}{\sqrt{\Delta_i{C}}}
                    \leq
                \sup_{0\leq s\leq 1}\left|B(s)\right|
                    \leq
                \frac{b_i(1-\delta)\varepsilon}{\sqrt{\Delta_i{C}}},
                |B(1)|\leq\frac{\Delta_i{b}\delta\varepsilon}{\sqrt{\Delta_i{C}}}
            \right) \\
    &\geq P_C
        \left(
            \frac{a_i(1+\delta)\varepsilon}{\sqrt{\Delta_i{C}}}
            \leq\sup_{0\leq s\leq 1}|B(s)|
            \leq\frac{b_i(1-\delta)\varepsilon}{\sqrt{\Delta_i{C}}}
        \right)
    P_C
        \left(
            |B(1)|\leq\frac{\Delta_i{b}\delta\varepsilon}{\sqrt{\Delta_i{C}}}
        \right) \\
	&= (f_i-g_i)  P_C
        \left(
            |B(1)|\leq\frac{\Delta_i{b}\delta\varepsilon}{\sqrt{\Delta_i{C}}}
        \right) .
\end{align*}
Thus, taking expectations we have that
\begin{align*}
\P\bigg(
        \bigcap_{i=1}^m \Upsilon_i
    \bigg)
    &\geq   \mathbb{E}\left[
                    \prod_{i=1}^m f_iP_C\left(
                                |B(1)|\leq\frac{\Delta_i{b}\delta\varepsilon}{\sqrt{\Delta_i{C}}}
                            \right)
                    -\sum_\Phi \prod_{i=1}^m \phi_iP_C\left(
                        |B(1)|\leq\frac{\Delta_i{b}\delta\varepsilon}{\sqrt{\Delta_i{C}}}
                    \right)
                \right] \\
    &
	\geq   \mathbb{E}\left[
                    \prod_{i=1}^m f_iP_C\left(
                            |B(1)|\leq\frac{\Delta_i{b}\delta\varepsilon}{\sqrt{\Delta_i{C}}}
                        \right)
                \right]
                -\sum_\Phi\mathbb{E}\left[
                        \prod_{i=1}^m \phi_i
                    \right]
\end{align*}
as desired.
\end{proof}


Now the next three lemmas give the necessary estimates on the terms appearing in Lemma
\ref{p.kuelbsLiProp2}.

\begin{lemma}\label{l.kueblsLi4.2}
Let $f_i$, $g_i$, and $\Phi$ be as in Lemma \ref{p.kuelbsLiProp2}.  Then for any $\phi\in\Phi$
\begin{multline*}
  \limsup_{\varepsilon\downarrow0}
\varepsilon^{2\alpha/(1+\alpha)}|\log\varepsilon|^{\beta/(1+\alpha)}
		\log\mathbb{E}
    \left[
        \prod_{i=1}^m \phi_i(\varepsilon)
    \right] \\
        \le -2^{-\beta/(1+\alpha)}(1+\alpha)^{1+\beta/(1+\alpha)}
		\left(\frac{\pi^2}{8\alpha}\right)^{\alpha/(1+\alpha)}
		\sum_{i=1}^m \left(\frac{K(t_{i-1},t_i)}{d_i^\phi(\delta)^{2\alpha}}\right)^{1/(1+\alpha)}
\end{multline*}
where $ d_i^\phi(\delta) := \left\{\begin{array}{ll} b_i(1-\delta) & \text{if
}\phi_i=f_i \\ a_i(1+\delta) & \text{if } \phi_i=g_i\end{array}\right.$.
\end{lemma}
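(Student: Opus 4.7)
My plan is to bound each conditional factor $\phi_i$ by the standard Brownian small-ball upper bound in (\ref{e.bmb}), multiply, take expectation, and then invoke the large-$\lambda$ Laplace asymptotic (\ref{e.EC}) supplied by Assumption \ref{a.a}. This is the same strategy that produced the matching upper bound in Proposition \ref{p.kuelbsLiProp1}, only now the radii are $d_i^\phi(\delta)\varepsilon/\sqrt{\Delta_i C}$ instead of $b_i\varepsilon/\sqrt{\Delta_i C}$.

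Concretely, applying (\ref{e.bmb}) conditionally on $C$ gives, regardless of whether $\phi_i=f_i$ or $\phi_i=g_i$,
\[
\phi_i \le \frac{4}{\pi}\exp\left(-\frac{\pi^2 \Delta_i C}{8\, d_i^\phi(\delta)^2 \varepsilon^2}\right).
\]
Multiplying over $i$ and taking expectations,
\[
\mathbb{E}\left[\prod_{i=1}^m \phi_i\right] \le \left(\frac{4}{\pi}\right)^{m} \mathbb{E}\left[\exp\left(-\lambda \sum_{i=1}^m \frac{\Delta_i C}{d_i^\phi(\delta)^2}\right)\right], \qquad \lambda := \frac{\pi^2}{8\varepsilon^2}.
\]
I then feed the right-hand side into (\ref{e.EC}) with weights $d_i = d_i^\phi(\delta)^{-2}$.

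Before invoking (\ref{e.EC}), I must check that $\{d_i^\phi(\delta)^{-2}\}_{i=1}^m$ is decreasing, equivalently that $d_i^\phi(\delta)$ is non-decreasing. Using $a_i(1+\delta)\le b_i(1-\delta)$ (the hypothesis of Lemma \ref{p.kuelbsLiProp2}) together with the ordering $b_i\le a_{i+1}<b_{i+1}$, a short case check over $\phi_i,\phi_{i+1}\in\{f,g\}$ confirms $d_i^\phi(\delta)\le d_{i+1}^\phi(\delta)$ in all four cases. The only degenerate situation is when some $a_i=0$ with $\phi_i=g_i$, in which case $g_i\equiv 0$ makes both sides of the claimed inequality equal to $-\infty$ and the lemma is trivially true. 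With the monotonicity established, (\ref{e.EC}) yields, as $\varepsilon\downarrow 0$,
\[
\log \mathbb{E}\left[e^{-\lambda \sum d_i \Delta_i C}\right] \sim -(1+\alpha)^{1+\beta/(1+\alpha)}\alpha^{-\alpha/(1+\alpha)}\lambda^{\alpha/(1+\alpha)}(\log\lambda)^{-\beta/(1+\alpha)}\sum_{i=1}^m\left(\frac{K(t_{i-1},t_i)}{d_i^\phi(\delta)^{2\alpha}}\right)^{1/(1+\alpha)}.
\]

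Finally, substituting $\lambda=\pi^2/(8\varepsilon^2)$ gives $\lambda^{\alpha/(1+\alpha)}=(\pi^2/8)^{\alpha/(1+\alpha)}\varepsilon^{-2\alpha/(1+\alpha)}$ and $\log\lambda \sim 2|\log\varepsilon|$, the latter producing the factor $2^{-\beta/(1+\alpha)}$. Multiplying through by $\varepsilon^{2\alpha/(1+\alpha)}|\log\varepsilon|^{\beta/(1+\alpha)}$ — the $(4/\pi)^m$ prefactor and the additive $\log(\pi^2/8)$ inside $\log\lambda$ are both absorbed into $o(1)$ after this rescaling — yields precisely the constant $2^{-\beta/(1+\alpha)}(1+\alpha)^{1+\beta/(1+\alpha)}(\pi^2/(8\alpha))^{\alpha/(1+\alpha)}$ asserted in the lemma. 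There is no genuine obstacle; the proof is a conditional application of (\ref{e.bmb}) followed by bookkeeping in (\ref{e.EC}). The two items requiring care are the monotonicity verification above and the correct tracking of $\varepsilon$ and $|\log\varepsilon|$ powers when passing from $\lambda\to\infty$ to $\varepsilon\downarrow 0$.
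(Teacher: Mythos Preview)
Your proof is correct and follows exactly the paper's approach: bound each $\phi_i$ via the upper half of (\ref{e.bmb}), multiply, take expectation, and apply (\ref{e.EC}). You are in fact more careful than the paper, which omits the verification that $\{d_i^\phi(\delta)^{-2}\}$ is decreasing (needed for Assumption \ref{a.a}) and the degenerate case $a_i=0$; your case check and your handling of $g_i\equiv 0$ fill those small gaps.
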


\begin{proof}
By the upper bound in \eqref{e.bmb},
\begin{equation*}
    \begin{aligned}
      \prod_{i=1}^m \phi_i
        &=  \prod_{i=1}^m P_C\left(
                \sup_{0\leq s\leq 1}|B(s)|
                \leq\frac{d_i^\phi(\delta)\varepsilon}{\sqrt{\Delta_i{C}}}
            \right) \\
        &\le  \left(
                \frac{4}{\pi}
            \right)^m
\exp\left(-\frac{\pi^2}{8\varepsilon^2}\sum_{i=1}^m\frac{\Delta_i{C}}{d_i^\phi(\delta)^2}\right).
    \end{aligned}
\end{equation*}
Then applying (\ref{e.EC}) completes the proof.
\end{proof}


\begin{lemma}\label{l.KuelbsLiLemma4.3}
Suppose that $\{\eta_i\}_{i=1}^m$ are nonnegative random variables such that, for any
$\beta_1,\ldots,\beta_m>0$, there exists $\alpha>0$, $\beta\in\mathbb{R}$, and $K>0$ such that
    \begin{equation*}
        \lim_{\varepsilon\downarrow0}\varepsilon^\alpha|\log\varepsilon|^\beta
            \log \P\left(\sum_{i=1}^m \beta_i\eta_i\leq\varepsilon\right)
        \ge - K.
    \end{equation*}
Let $P_\eta = \P(\cdot\mid \eta_1,\ldots,\eta_m)$.
Then, if $G$ is a standard normal random variable and
$\gamma_1,\ldots,\gamma_m>0$, we have that
    \begin{multline*}
        \liminf_{\lambda\rightarrow\infty}
		\lambda^{-\alpha/(1+\alpha)}(\log\lambda)^{\beta /(1+\alpha)}
		\mathbb{E}
            \left[
                \exp\left(-\lambda\sum_{i=1}^m \beta_i\eta_i\right)
                    \prod_{i=1}^m
	P_\eta\left(|G|\leq\frac{\gamma_i}{\sqrt{\lambda\eta_i}}\right)
            \right] \\
            \geq
                -(1+\alpha)^{1+\beta/(1+\alpha)}
			\alpha^{-\alpha/(1+\alpha)}K^{1/(1+\alpha)}.
    \end{multline*}
\end{lemma}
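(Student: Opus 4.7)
The plan is to lower-bound the expectation by restricting to the small-deviation event
\[
A_\varepsilon := \left\{\sum_{i=1}^m \beta_i\eta_i \leq \varepsilon\right\}
\]
for a parameter $\varepsilon=\varepsilon_\lambda\downarrow 0$ tuned to $\lambda$, on which each factor in the integrand admits a clean deterministic lower bound.  On $A_\varepsilon$ one has $\exp(-\lambda \sum_i \beta_i\eta_i)\geq e^{-\lambda\varepsilon}$, and since $\beta_j\eta_j\leq \sum_i\beta_i\eta_i\leq \varepsilon$ forces $\eta_j\leq\varepsilon/\beta_j$, monotonicity of $c\mapsto \P(|G|\leq c)$ gives
\[
P_\eta\!\left(|G|\leq \frac{\gamma_i}{\sqrt{\lambda\eta_i}}\right) \geq \P\!\left(|G|\leq \frac{\gamma_i\sqrt{\beta_i}}{\sqrt{\lambda\varepsilon}}\right).
\]
Restricting the expectation to $A_\varepsilon$ and taking logarithms would then yield
\[
\log\mathbb{E}[\cdots] \geq -\lambda\varepsilon + \sum_{i=1}^m \log\P\!\left(|G|\leq \frac{\gamma_i\sqrt{\beta_i}}{\sqrt{\lambda\varepsilon}}\right) + \log\P(A_\varepsilon).
\]

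Next I would set $\varepsilon_\lambda := c\,\lambda^{-1/(1+\alpha)}(\log\lambda)^{-\beta/(1+\alpha)}$ for a constant $c>0$ to be chosen.  Since $|\log\varepsilon_\lambda|\sim (1+\alpha)^{-1}\log\lambda$, at this scale $\lambda\varepsilon_\lambda \sim c\,\lambda^{\alpha/(1+\alpha)}(\log\lambda)^{-\beta/(1+\alpha)}$ and $\varepsilon_\lambda^{-\alpha}|\log\varepsilon_\lambda|^{-\beta} \sim c^{-\alpha}(1+\alpha)^\beta\,\lambda^{\alpha/(1+\alpha)}(\log\lambda)^{-\beta/(1+\alpha)}$.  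The arguments $\gamma_i\sqrt{\beta_i}/\sqrt{\lambda\varepsilon_\lambda}$ vanish like $\lambda^{-\alpha/(2(1+\alpha))}$ up to log factors, so applying the elementary bound $\P(|G|\leq c)\geq \sqrt{2/\pi}\,ce^{-c^2/2}$ shows that each $\log\P$ summand contributes only $O(\log\lambda)$, strictly subdominant to $\lambda^{\alpha/(1+\alpha)}(\log\lambda)^{-\beta/(1+\alpha)}$.  Meanwhile the hypothesis gives $\log\P(A_{\varepsilon_\lambda})\geq -(K+o(1))\varepsilon_\lambda^{-\alpha}|\log\varepsilon_\lambda|^{-\beta}$.

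Assembling these estimates and normalizing by $\lambda^{\alpha/(1+\alpha)}(\log\lambda)^{-\beta/(1+\alpha)}$ produces
\[
\liminf_{\lambda\to\infty}\lambda^{-\alpha/(1+\alpha)}(\log\lambda)^{\beta/(1+\alpha)}\log\mathbb{E}[\cdots]\;\geq\; -\bigl(c + K(1+\alpha)^\beta c^{-\alpha}\bigr).
\]
Optimizing over $c>0$ (minimum attained at $c=(\alpha K(1+\alpha)^\beta)^{1/(1+\alpha)}$) gives precisely $(1+\alpha)^{1+\beta/(1+\alpha)}\alpha^{-\alpha/(1+\alpha)}K^{1/(1+\alpha)}$, which matches the claimed constant.

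The main obstacle I anticipate is careful bookkeeping of the $(1+\alpha)^{\beta/(1+\alpha)}$ factor: it enters via the refined asymptotic $|\log\varepsilon_\lambda|\sim (1+\alpha)^{-1}\log\lambda$ and is exactly what, after the $c$-optimization, promotes the exponent on $(1+\alpha)$ from $1$ to $1+\beta/(1+\alpha)$.  Everything else amounts to combining the deterministic lower bounds valid on $A_\varepsilon$, the hypothesized small-deviation estimate, and a single one-parameter optimization.
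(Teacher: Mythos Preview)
Your proposal is correct and follows essentially the same route as the paper: restrict to the small-deviation event $\{\sum_i\beta_i\eta_i\le \varepsilon_\lambda\}$ with $\varepsilon_\lambda=c\,\lambda^{-1/(1+\alpha)}(\log\lambda)^{-\beta/(1+\alpha)}$, use $\eta_i\le\varepsilon_\lambda/\beta_i$ to replace the Gaussian factors by terms contributing only $O(\log\lambda)$, invoke the hypothesis for $\log\P(A_{\varepsilon_\lambda})$, and optimize over $c$. The only cosmetic difference is that the paper first restricts to a fixed level $L$ to extract the Gaussian bound $(K''/\sqrt{\lambda})^m$ and then further to $\theta(\lambda)=\xi\,\lambda^{-1/(1+\alpha)}(\log\lambda)^{-\beta/(1+\alpha)}$, whereas you do both in a single step; the computations and the optimizing constant $c=(\alpha K(1+\alpha)^\beta)^{1/(1+\alpha)}$ coincide.
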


\begin{proof}
For any $L>0$, when $\sum_{i=1}^m \beta_i\eta_i \le L$, the
positivity of all parameters implies that $\eta_i\le L/\beta_i$ for each $i$ and thus
\[ \min_{1\le i\le m} \frac{\gamma_i}{\sqrt{\eta_i}}
		\ge \min_{1\le i\le m} \gamma_i\sqrt{\frac{\beta_i}{L}} > 0. \]
Also, note that for all sufficiently small $x>0$, one may choose $K'>0$ such that
$\P(|G|\le x) \ge K'x$.
Thus, for sufficiently large $\lambda$, there exists $K''>0$ such that
\begin{align}
\mathbb{E}&\notag
		\left[\exp\left(-\lambda\sum_{i=1}^m \beta_i\eta_i\right)
		\prod_{i=1}^m P_\eta\left(|G|\le
		\frac{\gamma_i}{\sqrt{\lambda\eta_i}}\right)\right] \\
	&\notag
	\ge \mathbb{E}\left[\exp\left(-\lambda\sum_{i=1}^m \beta_i\eta_i\right)
	\left(\min_{1\le i\le m} P_\eta\left(|G|\le
			\frac{\gamma_i}{\sqrt{\lambda\eta_i}}\right)\right)^m;
		\sum_{i=1}^m \beta_i\eta_i \le L\right]  \\
	&\label{e.11}
	\ge \left(\frac{K''}{\sqrt{\lambda}}\right)^m
		\mathbb{E}\left[\exp\left(-\lambda\sum_{i=1}^m \beta_i\eta_i\right);
		\sum_{i=1}^m \beta_i\eta_i \le L\right].
\end{align}
Thus, for any $\xi>0$, we may take $\theta(\lambda)
=\xi\lambda^{-1/(1+\alpha)}(\log\lambda)^{-\beta/(1+\alpha)}$, and
we have
\begin{align*}
&\liminf_{\lambda\rightarrow\infty}
	\lambda^{-\alpha/(1+\alpha)}(\log\lambda)^{\beta/(1+\alpha)}
		\log \mathbb{E}\left[\exp\left(-\lambda\sum_{i=1}^m \beta_i\eta_i\right);
		\sum_{i=1}^m \beta_i\eta_i \le L\right] \\
	&\ge \liminf_{\lambda\rightarrow\infty}
		\lambda^{-\alpha/(1+\alpha)} (\log\lambda)^{\beta/(1+\alpha)}
		\log \mathbb{E}\left[\exp\left(-\lambda\sum_{i=1}^m \beta_i\eta_i\right);
		\sum_{i=1}^m \beta_i\eta_i \le \theta(\lambda) \right] \\
	&\ge \liminf_{\lambda\rightarrow\infty}
		\lambda^{-\alpha/(1+\alpha)} (\log\lambda)^{\beta/(1+\alpha)}
		\left(-\theta(\lambda)\lambda + \log\P\left(\sum_{i=1}^m \beta_i\eta_i\le
		\theta(\lambda)\right)\right) \\
	&= -\xi + \xi^{-\alpha}(1+\alpha)^\beta\liminf_{\lambda\rightarrow\infty}
		\theta(\lambda)^{\alpha} |\log\theta(\lambda)|^{\beta}
		\log\P\left(\sum_{i=1}^m \beta_i\eta_i\le
		\theta(\lambda)\right) \\
	&\ge -\xi - \xi^{-\alpha}(1+\alpha)^\beta K
\end{align*}
In particular, combining this inequality with (\ref{e.11}) and
taking $\xi=(K\alpha(1+\alpha)^{\beta})^{1/(1+\alpha)}$ completes the
proof.
\end{proof}

\begin{lemma}\label{l.kuelbsLi4.4}
Let $f_i$ be as in Lemma \ref{p.kuelbsLiProp2}. Then
\begin{multline*}
    \liminf_{\varepsilon\downarrow0} \varepsilon^{2\alpha/(1+\alpha)}
		|\log\varepsilon|^{\beta/(1+\alpha)} \log \mathbb{E}
        \left[
            \prod_{i=1}^m f_i(\varepsilon)
            P_C
                \left(
                    |B(1)|\leq\frac{\Delta_i{b}\delta\varepsilon}{\sqrt{\Delta_i{C}}}
                \right)
        \right] \\
    \geq -2^{-\beta/(1+\alpha)}(1+\alpha)^{1+\beta/(1+\alpha)}
		\left(\frac{\pi^2}{8\alpha}\right)^{\alpha/(1+\alpha)}
		\sum_{i=1}^m
		\left(\frac{K(t_{i-1},t_i)}{b_i^{2\alpha}(1-\delta)^{2\alpha}}\right)^{1/(1+\alpha)}.
\end{multline*}
\end{lemma}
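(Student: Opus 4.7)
The plan is to apply Lemma \ref{l.KuelbsLiLemma4.3} after replacing each $f_i$ by its exponential lower bound from (\ref{e.bmb}). First I would use the lower bound in (\ref{e.bmb}) to extract an exponential factor from each $f_i$:
\[ f_i(\varepsilon) \ge \frac{2}{\pi}\exp\left(-\frac{\pi^2}{8\varepsilon^2}\cdot\frac{\Delta_i C}{b_i^2(1-\delta)^2}\right). \]
Plugging this into the expectation on the left-hand side produces a constant $(2/\pi)^m$ prefactor (whose logarithmic contribution vanishes after multiplication by $\varepsilon^{2\alpha/(1+\alpha)}|\log\varepsilon|^{\beta/(1+\alpha)}$) times
\[ \mathbb{E}\left[\exp\left(-\frac{\pi^2}{8\varepsilon^2}\sum_{i=1}^m \frac{\Delta_i C}{b_i^2(1-\delta)^2}\right)\prod_{i=1}^m P_C\left(|B(1)|\le \frac{\Delta_i b\,\delta\varepsilon}{\sqrt{\Delta_i C}}\right)\right]. \]

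Next I would cast this expectation in the form required by Lemma \ref{l.KuelbsLiLemma4.3}. Setting $\lambda := \pi^2/(8\varepsilon^2)$, $\eta_i := \Delta_i C$, $\beta_i := 1/(b_i^2(1-\delta)^2)$, and $\gamma_i := \pi\,\Delta_i b\,\delta/(2\sqrt{2})$, a direct check confirms $\gamma_i/\sqrt{\lambda\eta_i} = \Delta_i b\,\delta\varepsilon/\sqrt{\Delta_i C}$, so $\gamma_i$ is a genuine constant independent of $\lambda$. Because the chain $0\le a_1<b_1\le a_2<\cdots<b_m$ forces $\{b_i\}$ to be strictly increasing, $\{\beta_i\}$ is a decreasing sequence of positive reals, and Assumption \ref{a.a} applied with $d_i = \beta_i$ supplies the small deviation hypothesis of Lemma \ref{l.KuelbsLiLemma4.3} with
\[ K = \left(\sum_{i=1}^m \left(\beta_i^{\alpha} K(t_{i-1},t_i)\right)^{1/(1+\alpha)}\right)^{1+\alpha}, \quad \text{so} \quad K^{1/(1+\alpha)} = \sum_{i=1}^m \left(\frac{K(t_{i-1},t_i)}{b_i^{2\alpha}(1-\delta)^{2\alpha}}\right)^{1/(1+\alpha)}. \]

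Applying Lemma \ref{l.KuelbsLiLemma4.3} then yields a liminf statement in $\lambda$, and the last step is to convert back to $\varepsilon$ using
\[ \lambda^{-\alpha/(1+\alpha)} = (8/\pi^2)^{\alpha/(1+\alpha)}\varepsilon^{2\alpha/(1+\alpha)} \quad\text{and}\quad (\log\lambda)^{\beta/(1+\alpha)} \sim 2^{\beta/(1+\alpha)}|\log\varepsilon|^{\beta/(1+\alpha)} \]
as $\varepsilon\downarrow 0$. Using the identity $\alpha^{-\alpha/(1+\alpha)}(\pi^2/8)^{\alpha/(1+\alpha)} = (\pi^2/(8\alpha))^{\alpha/(1+\alpha)}$, the combined constants collapse to exactly the coefficient appearing in the statement. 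The main obstacle is purely bookkeeping: one must verify that $\gamma_i$ is $\lambda$-independent, that $\{\beta_i\}$ is decreasing so that Assumption \ref{a.a} is applicable, and that the constants $(8/\pi^2)^{\alpha/(1+\alpha)}$, $2^{\beta/(1+\alpha)}$, $(1+\alpha)^{1+\beta/(1+\alpha)}$, and $K^{1/(1+\alpha)}$ combine correctly after the substitution; no further probabilistic input is needed.
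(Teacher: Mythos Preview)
Your proposal is correct and follows exactly the paper's approach: bound $\prod_i f_i$ below via (\ref{e.bmb}), then invoke Lemma~\ref{l.KuelbsLiLemma4.3} with the small-deviation input coming from Assumption~\ref{a.a} (equivalently (\ref{e.EC})). The paper's own proof is just a two-sentence sketch of this same argument, so your write-up is a faithful expansion with all the bookkeeping (the choice of $\lambda$, $\beta_i$, $\gamma_i$, the monotonicity of $\{b_i\}$, and the conversion of constants) made explicit.
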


\begin{proof}
The lower bound in (\ref{e.bmb}) implies that
\begin{equation*}
    \begin{aligned}
      \prod_{i=1}^m f_i
        &=  \prod_{i=1}^m P_C\left(
                \sup_{0\leq s\leq1}|B(s)|
                \leq\frac{b_i(1-\delta)\varepsilon}{\sqrt{\Delta_i{C}}}
            \right) \\
        &\ge  \left(
                \frac{2}{\pi}
            \right)^m
\exp\left(-\frac{\pi^2}{8\varepsilon^2}\sum_{i=1}^m\frac{\Delta_i{C}}{b_i^2(1-\delta)^2}\right).
    \end{aligned}
\end{equation*}
Using this estimate to bound the desired expectation and applying
Lemma \ref{l.KuelbsLiLemma4.3} and equation (\ref{e.EC}) completes the proof.
\end{proof}


\begin{prop}\label{p.KuelbsLiProp3}
Under the hypotheses of Theorem \ref{t.SB}, we have that
  \begin{multline*}
    \liminf_{\varepsilon\downarrow 0}\varepsilon^{2\alpha/(1+\alpha)}
|\log\varepsilon|^{\beta/(1+\alpha)}
        \log \P\left(\bigcap_{i=1}^m
                \{a_i\varepsilon\leq M(t_i)\leq b_i\varepsilon\}
            \right) \\
        \geq
            -2^{-\beta/(1+\alpha)}(1+\alpha)^{1+\beta/(1+\alpha)}
		\left(\frac{\pi^2}{8\alpha}\right)^{\alpha/(1+\alpha)}
		\sum_{i=1}^m
			\left(\frac{K(t_{i-1},t_i)}{b_i^{2\alpha}}\right)^{1/(1+\alpha)}.
  \end{multline*}
\end{prop}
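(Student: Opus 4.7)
The plan is to feed the decomposition of Lemma \ref{p.kuelbsLiProp2} directly into Lemmas \ref{l.kueblsLi4.2} and \ref{l.kuelbsLi4.4}. Fix any $\delta \in (0,\gamma)$ small enough that $a_i(1+\delta) < b_i(1-\delta)$ for all $i$. Since the event $\bigcap_{i=1}^m\{a_i\varepsilon\le M(t_i)\le b_i\varepsilon\}\cap\{|Z(t_m)|\le b_m\gamma\varepsilon\}$ is contained in $\bigcap_{i=1}^m\{a_i\varepsilon\le M(t_i)\le b_i\varepsilon\}$, the lower bound of Lemma \ref{p.kuelbsLiProp2} applies also to the probability of interest. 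Writing
\[
T_f(\varepsilon,\delta) := \mathbb{E}\left[\prod_{i=1}^m f_i \, P_C\left(|B(1)|\le \tfrac{\Delta_i b\,\delta\varepsilon}{\sqrt{\Delta_i C}}\right)\right], \qquad T_\phi(\varepsilon,\delta) := \mathbb{E}\left[\prod_{i=1}^m \phi_i\right],
\]
it therefore suffices to show that $T_f$ dominates $\sum_{\phi\in\Phi} T_\phi$ on the exponential scale, so that the difference satisfies $T_f-\sum_\phi T_\phi\ge \frac{1}{2}T_f$ for small $\varepsilon$.

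The key rate comparison goes as follows. For any $\phi\in\Phi$ there is an index $i_0$ with $\phi_{i_0}=g_{i_0}$. If $a_{i_0}=0$, then $g_{i_0}=P_C(\sup_{[0,1]}|B|\le 0)=0$, so $T_\phi\equiv 0$ and this $\phi$ may be discarded. Otherwise $d_{i_0}^\phi(\delta)=a_{i_0}(1+\delta)<b_{i_0}(1-\delta)$, and at every other index $d_j^\phi(\delta)\le b_j(1-\delta)$, so term-by-term comparison shows that the exponential rate constant for $T_\phi$ coming from Lemma \ref{l.kueblsLi4.2} is strictly more negative than the rate constant for $T_f$ coming from Lemma \ref{l.kuelbsLi4.4}. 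Choosing $\eta>0$ less than half of this strict gap, we conclude that
\[
\log T_\phi(\varepsilon,\delta) - \log T_f(\varepsilon,\delta) \le -\eta\,\varepsilon^{-2\alpha/(1+\alpha)}|\log\varepsilon|^{-\beta/(1+\alpha)}
\]
for all sufficiently small $\varepsilon$, hence $T_\phi/T_f\to 0$ at this super-exponential rate and the inequality $\sum_\phi T_\phi\le\frac{1}{2}T_f$ holds eventually.

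Once this domination is established, the proof concludes quickly. For small $\varepsilon$,
\[
\log\P\left(\bigcap_{i=1}^m\{a_i\varepsilon\le M(t_i)\le b_i\varepsilon\}\right) \ge \log\!\left(T_f(\varepsilon,\delta)-\sum_{\phi\in\Phi}T_\phi(\varepsilon,\delta)\right) \ge \log\tfrac{1}{2} + \log T_f(\varepsilon,\delta).
\]
Multiplying by $\varepsilon^{2\alpha/(1+\alpha)}|\log\varepsilon|^{\beta/(1+\alpha)}$, the constant $\log\frac{1}{2}$ is absorbed in the liminf, and Lemma \ref{l.kuelbsLi4.4} supplies the desired bound with a factor of $(1-\delta)^{-2\alpha}$ inside the sum. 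Finally sending $\delta\downarrow 0$ (which only weakens the lower bound toward the claimed constant) finishes the proof. The main obstacle is the rate comparison step: one must verify strict inequality of the rate constants for every contributing $\phi$, and handle the boundary case $a_{i_0}=0$ by observing that $g_{i_0}$ degenerates to zero.
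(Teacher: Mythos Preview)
Your proposal is correct and follows essentially the same route as the paper: apply Lemma~\ref{p.kuelbsLiProp2}, use Lemmas~\ref{l.kueblsLi4.2} and~\ref{l.kuelbsLi4.4} to compare the exponential rates of $T_f$ and each $T_\phi$, exploit the strict gap to absorb the subtracted terms, and finally send $\delta\downarrow 0$. Your explicit treatment of the degenerate case $a_{i_0}=0$ (where $g_{i_0}\equiv 0$) is a small clarification the paper omits, and your ``$\sum_\phi T_\phi\le \tfrac12 T_f$ eventually'' is just a rephrasing of the paper's use of $\log(A-B)=\log A+\log(1-B/A)$.
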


\begin{proof}
Clearly, for any $\gamma>0$,
\begin{align*}
 \P\left(\bigcap_{i=1}^m
                \{a_i\varepsilon\leq M(t_i)\leq b_i\varepsilon\}
            \right)
	&\ge \P\left(\bigcap_{i=1}^m
                \{a_i\varepsilon\leq M(t_i)\leq b_i\varepsilon\},
            |Z(t_m)|\le b_m\gamma\varepsilon\right).
\end{align*}
Thus, by Lemma \ref{p.kuelbsLiProp2}, for any $0<\delta<\gamma$ with $\delta$
sufficiently small that $a_i(1+\delta)<b_i(1-\delta)$ for each $i$, we have
that
\begin{align*}
 \P\left(\bigcap_{i=1}^m
                \{a_i\varepsilon\leq M(t_i)\leq b_i\varepsilon\}
            \right)	&\ge \mathbb{E}\left[
                    \prod_{i=1}^m f_iP_C\left(
                            |B(1)|\leq\frac{\Delta_i{b}\delta\varepsilon}{\sqrt{\Delta_i{C}}}
                        \right)
                \right]
                -\sum_{\phi\in\Phi}\mathbb{E}\left[
                        \prod_{i=1}^m \phi_i
                    \right].
\end{align*}
Now, given any $\phi\in\Phi$, the associated sequence
$\{d_i^\phi(\delta)\}_{i=1}^m$ (as defined
in Lemma \ref{l.kueblsLi4.2}) must satisfy
$d_i^\phi(\delta)=a_i(1+\delta)$ for at least one $i$.
Thus, for any $\phi\in\Phi$ we have that
\[ \sum_{i=1}^m \frac{K(t_{i-1},t_i)}{b_i(1-\delta)}
	< \sum_{i=1}^m \frac{K(t_{i-1},t_i)}{d_i^\phi(\delta)}. \]
Given this strict inequality, Lemmas \ref{l.kueblsLi4.2} and \ref{l.kuelbsLi4.4} imply that, for each
$\phi\in\Phi$
\[ \frac{\mathbb{E}\left[\prod_{i=1}^m \phi_i(\varepsilon)\right]}
	{\mathbb{E}\left[
                    \prod_{i=1}^m f_i(\varepsilon)P_C\left(
                            |B(1)|\leq\frac{\Delta_i{b}\delta\varepsilon}{\sqrt{\Delta_i{C}}}
                        \right)
                \right]} \rightarrow 0 \]
as $\varepsilon\downarrow0$.  This fact, combined with the identity $\log
(A-B) = \log A + \log(1-B/A)$ and again applying Lemma \ref{l.kuelbsLi4.4}
gives the desired result with $b_i$ replaced by $b_i(1-\delta)$.  Since
$\delta>0$ was arbitrary, allowing $\delta\downarrow0$ completes the proof.
\end{proof}

As alluded to in the discussion from Section \ref{s.intro}, a brief
review of the proof shows that conditioning  easily
determines the first-order ($m=1$) asymptotics of $Z=X\circ C$ for
other self-similar processes $X$ satisfying their own small ball estimates.
The following statement could also be inferred from the proofs of
\cite{AurzadaLifshits2009}.

\begin{prop}
\label{p.au}
Suppose that $\{\hat{C}(t)\}_{t\ge0}$ is continuous non-negative non-decreasing
and $\{X(t)\}_{t\ge0}$ is an $H$-self-similar process (that is, $\{X(ct)\}_{t\ge0}\overset{d}{=}\{c^HX(t)\}_{t\ge0}$ for any $c>0$)
which is independent of $\hat{C}$.  If there exist
$\alpha, \theta,\kappa>0$ and $K:(0,\infty)\rightarrow(0,\infty)$ such that
\[ \lim_{\varepsilon\downarrow0} \varepsilon^\alpha \log
	\P\left(
	\hat{C}(t)\le\varepsilon\right) = -K(t) \]
for all $t>0$ and
\begin{equation*}
\lim_{\varepsilon\downarrow0} \varepsilon^{\theta} \log \P\left(\sup_{s\in[0,1]}
	|X(s)|\le\varepsilon\right) = -\kappa,
\end{equation*}
then
\[
\lim_{\varepsilon\downarrow0} \varepsilon^{\alpha\theta/(\rho+\alpha)}
		\log \P\left(\sup_{s\in[0,t]} |X(\hat{C}(s))|\le\varepsilon\right) \\
	= - (\rho+\alpha)
		(\kappa^{\alpha}\rho^{-\rho}\alpha^{-\alpha}K(t)^\rho)^{1/(\rho+\alpha)}
\]
where $\rho=\theta H$.
\end{prop}

\begin{proof}
Under the assumptions on $X$, for any $\delta>0$, there exists
$\varepsilon_0=\varepsilon_0(\delta)$
such that for any $\varepsilon\in(0,\varepsilon_0)$
\[ \exp\left(-(1+\delta)\kappa \varepsilon^{-\theta} \right) 
	\le \P\left(\sup_{s\in[0,1]} |X(s)|\le\varepsilon\right)
	\le  \exp\left(-(1-\delta)\kappa \varepsilon^{-\theta} \right). \] 
Thus, there exist $c_1,c_2\in(0,\infty)$ depending only on $\varepsilon_0$ so that,
for all $\varepsilon>0$,
\[ c_1\exp\left(-(1+\delta)\kappa \varepsilon^{-\theta} \right) 
	\le \P\left(\sup_{s\in[0,1]} |X(s)|\le\varepsilon\right)
	\le  c_2\exp\left(-(1-\delta)\kappa \varepsilon^{-\theta} \right). \] 
Then continuity of $\hat{C}$ and self-similarity of $X$ implies that
\begin{multline*}
P_{\hat{C}}\left(\sup_{s\in[0,t]} |X(\hat{C}(s))|\le\varepsilon\right)
	= P_{\hat{C}}\left(\sup_{s\in[0,\hat{C}(t)]} |X(s)|\le\varepsilon\right) \\
	= P_{\hat{C}}\left(\sup_{s\in[0,1]} \hat{C}(t)^H|X(s)|\le\varepsilon\right)
	\le c_2\exp\left(-(1-\delta)\kappa \hat{C}(t)^\rho\varepsilon^{-\theta}
\right).
\end{multline*}
Taking expectations and applying the asymptotics of $\hat{C}$ gives
\begin{multline*}
\limsup_{\varepsilon\downarrow0} \varepsilon^{\alpha\theta/(\rho+\alpha)}
		\log \P\left(\sup_{s\in[0,t]} |X(\hat{C}(s))|\le\varepsilon\right) \\
	= - (\rho+\alpha)
		(((1-\delta)\kappa)^{\alpha}\rho^{-\rho}\alpha^{-\alpha}K(t)^\rho)^{1/(\rho+\alpha)}.
\end{multline*}
Letting $\delta\downarrow0$ proves the upper bound.  The lower bound
follows in a similar manner.
\end{proof}

\begin{remark}
Note that this result can be
more general than that for the two-sided diffusions in
\cite{AurzadaLifshits2009} where they require that $\theta H=1$. This equality is
often satisfied with the supremum norm, but there are
basic processes in this setting for which this does not hold.  For example, the process $C$
defined in (\ref{e.basic}) is 2-self-similar, but by Theorem \ref{t.6.4}
satisfies a small ball estimate with $\alpha=1$.  (And more generally, for
$\tilde{\rho}\equiv1$ and general $p\in[1,\infty)$, $\alpha=2/p$ and
$H=(p+2)/2$.)
\end{remark}

\begin{remark}
Note also that we could have again allowed a slowly varying factor in the
asymptotics of $\hat{C}$, but we have omitted it for ease.
\end{remark}

\begin{remark}
It is in the iterative arguments for Theorem \ref{t.SB} that
one uses, for example, the Gaussian properties of Brownian motion.  It is
clear that some of these estimates may be extended to other more general
processes.  For example, there is a known analogue of the Anderson inequality
that holds for symmetric $\alpha$-stable processes (see for example
Lemma 2.1 of \cite{ChenKuelbsLi2000}) that one could use to
extend the proof of Proposition \ref{p.kuelbsLiProp1}.

\end{remark}

\section{Applications to second order chaos}
\label{s.defZ}

Here we apply the results of the previous section
to prove small deviations estimates for
stochastic integrals of the form
\[
    Z_t=\int_0^t \omega(W_s,dW_s),
\]
where $W$ is an infinite-dimensional Brownian motion and $\omega$ is an
anti-symmetric continuous bilinear form.
Small deviations have been studied for analogous integrals of
finite-dimensional Brownian motions in \cite{Remillard1994} and
\cite{KuelbsLi2005}.

First we define the integral processes we study.  We will then
prove that these processes are equal in distribution to a
Brownian motion under an independent time-change,
and we establish a small ball estimate for the relevant
random clock.  Then by applying the results of Section
\ref{s.SB}, we are able to prove small deviations results for $Z$.
We fix the following notation for the sequel.

\begin{notation}
\label{n.BMW}
Let $(\mathcal{W},\mathcal{H},\mu)$ be a real abstract Wiener space (see for example \cite{Kuo1975} and \cite{Bogachev1998}).
We will let
\[ \mathcal{H}_*:= \{h\in \mathcal{H}: \langle h,\cdot\rangle \text{ extends
	to a continuous linear functional on } \mathcal{W}\}. \]
Let $\{W_t\}_{t\ge0}$ be a Brownian motion on $\mathcal{W}$ with variance
determined by
\[ \mathbb{E}[\langle W_s,h\rangle\langle W_s,k\rangle]
	= \langle h,k\rangle_H\min(s,t) \]
for all $s,t\ge0$ and $h,k\in\mathcal{H}_*$.  Let
$\omega:\mathcal{W}\times\mathcal{W}\rightarrow \mathbb{R}$ be a
anti-symmetric continuous bilinear map.
\end{notation}

\begin{remark}
\label{r.HS}
It is standard that continuity for a bilinear map $\omega$ on
$\mathcal{W}\times\mathcal{W}$ implies that the restriction of $\omega$ to
$\mathcal{H}\times\mathcal{H}$ is Hilbert-Schmidt, that is,
\[ \|\omega\|_{HS}^2
	:= \|\omega|_{\mathcal{H}\times\mathcal{H}}\|_{\mathcal{H}^{\otimes2}}^2
	:= \sum_{i,j=1}^\infty |\omega(h_i,h_j)|^2 <\infty \]
where $\{h_i\}_{i=1}^\infty$ is any orthonormal basis of $\mathcal{H}$; see for
example Proposition 3.14 of \cite{DriverGordina2008}.
\end{remark}

Associated to any abstract Wiener space is a class of canonical
projections.  Suppose that $P:\mathcal{H}\rightarrow \mathcal{H}$ is a finite-rank orthogonal
projection such that $P\mathcal{H}\subset \mathcal{H}_*$.  Let $\{ e_j\}_{j=1}^n$ be an orthonormal
basis for $P\mathcal{H}$.  Then we may extend $P$ to a (unique) continuous operator from
$\mathcal{W}\rightarrow \mathcal{H}$ (still denoted by $P$) by letting
\begin{equation}
\label{e.proj} Pw := \sum_{j=1}^n \langle w, e_j\rangle_\mathcal{H}  e_j
\end{equation}
for all $w\in \mathcal{W}$.

\begin{notation}\label{n.proj}
Let $\mathrm{Proj}(\mathcal{W})$ denote the collection of finite-rank
projections on $\mathcal{H}$
such that $P\mathcal{H}\subset \mathcal{H}_*$ and $P|_\mathcal{H}:\mathcal{H}\rightarrow \mathcal{H}$ is an orthogonal projection
(that is, $P$ has the form given in equation \eqref{e.proj}).
\end{notation}

For $P$ as in (\ref{e.proj}) and $\{W_t\}_{t\ge0}$ a Brownian motion on $\mathcal{W}$ as in Notation \ref{n.BMW}, $\{PW_t\}_{t\ge0}$ is a Brownian motion on the finite-dimensional space $\mathrm{Range}(P)$ and thus may be expressed as $PW_t=\sum_{j=1}^n W_t^je_j$ where the $W^j$'s are independent real-valued Brownian motions. We will let $\{Z_t^P\}_{t\ge0}$ denote the process defined by
\[
Z_t^P := \int_0^t \omega(PW_s,dPW_s).
\]
Note that, by the bilinearity and anti-symmetry of $\omega$, we may write
\begin{multline*}
Z_t^P = \int_0^t \omega\left(\sum_{j=1}^n W^j_se_j,\sum_{k=1}^n dW^k_se_k\right) \\
	= \sum_{j,k=1}^n \omega(e_j,e_k) \int_0^t W^j_s dW^k_s
	= \sum_{j<k} \omega(e_j,e_k) \int_0^t W^j_s dW^k_s-W^k_s dW^j_s;
\end{multline*}
thus, $\{Z_t^P\}_{t\ge0}$ is a continuous $L^2$-martingale.

It is well-known that $\mathcal{H}_*$ contains an
orthonormal basis of $\mathcal{H}$.  Thus, we may always take a sequence
$P_n\in\mathrm{Proj}(\mathcal{W})$ so that $P_n|_\mathcal{H}\uparrow
I_\mathcal{H}$.

\begin{prop}
\label{p.Mn}
If $\{P_n\}_{n=1}^\infty\subset\mathrm{Proj}(\mathcal{W})$ is an
sequence of projections such that $P_n|_\mathcal{H}\uparrow
I_\mathcal{H}$ and $Z_t^n:=Z_t^{P_n}$, then there exists an
$L^2$-martingale $\{Z_t\}_{t\ge0}$ such that, for all
$p\in[1,\infty)$ and $T>0$,
\begin{equation}\label{e.martingaleLimit}
     \lim_{n\rightarrow\infty} \mathbb{E}\left[\sup_{0\le t\le T}
    |Z_t^n-Z_t|^p\right]=0,
\end{equation}
and $\{Z_t\}_{t\ge0}$ is independent of the sequence of projections.
Thus, we will denote the limiting process by
\[
    Z_t = \int_0^t \omega(W_s,dW_s).
\]
The quadratic variation of $Z$ is given by
\begin{equation}
\label{e.qv}
\langle Z\rangle_t = \int_0^t \|\omega(W_s,\cdot)\|^2_{\mathcal{H}}\,ds
	:= \int_0^t \sum_{j=1}^\infty |\omega(W_s,e_j)|^2\,ds,
\end{equation}
where $\{e_j\}_{j=1}^\infty$ is an orthonormal basis of $\mathcal{H}$,
and, for all $p\in[1,\infty)$ and $T>0$, $\{Z_t\}_{t\ge0}$ satisfies
\begin{equation*}
\mathbb{E}\left[\sup_{0\le t\le T}
    |Z_t|^p\right]<\infty.
\end{equation*}
\end{prop}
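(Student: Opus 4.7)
The plan is to set up $Z^P$ as a finite-dimensional It\^o integral, prove the approximating sequence is Cauchy in $L^2(\Omega;C([0,T]))$ using Hilbert-Schmidt control inherited from Remark \ref{r.HS}, and derive the remaining properties by straightforward continuity arguments.

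Fix $P\in\mathrm{Proj}(\mathcal{W})$ with orthonormal basis $\{e_j\}_{j=1}^n$ for $P\mathcal{H}\subset\mathcal{H}_*$. Writing $PW_s=\sum_j W_s^j e_j$ with $W_s^j:=\langle W_s,e_j\rangle_\mathcal{H}$ independent standard Brownian motions, bilinearity gives
$$Z_t^P=\sum_{j,k=1}^n \omega(e_j,e_k)\int_0^t W_s^j\,dW_s^k,$$
a finite linear combination of continuous $L^2$-martingales (hence one), whose quadratic variation, computed via It\^o isometry and the antisymmetry of $\omega$, is $\int_0^t \sum_{k=1}^n\omega(PW_s,e_k)^2\,ds=\int_0^t\|\omega(PW_s,\cdot)|_{P\mathcal{H}}\|^2\,ds$.

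Next, for an increasing sequence $\{P_n\}$ with $P_n|_{\mathcal{H}}\uparrow I_\mathcal{H}$, pick a fixed orthonormal basis $\{e_k\}_{k=1}^\infty\subset\mathcal{H}_*$ whose first $\dim(P_n\mathcal{H})$ vectors span $P_n\mathcal{H}$. For $m<n$,
$$Z_t^n-Z_t^m=\sum_{(j,k)\in[n]^2\setminus[m]^2}\omega(e_j,e_k)\int_0^t W_s^j\,dW_s^k,$$
and the It\^o isometry yields $\mathbb{E}[(Z_T^n-Z_T^m)^2]\le T^2\sum_{\max(j,k)>m}\omega(e_j,e_k)^2$, which vanishes as $m\to\infty$ by Remark \ref{r.HS}. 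Doob's maximal inequality upgrades this to convergence of $\sup_{t\le T}|Z_t^n-Z_t^m|$ in $L^2$, producing a continuous $L^2$-martingale limit $Z$. The Burkholder-Davis-Gundy inequality combined with the fact that $Z^n-Z^m$ lives in second-order Wiener chaos (where hypercontractivity renders all $L^p$ norms equivalent) then gives \eqref{e.martingaleLimit} for every $p\in[1,\infty)$.

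Independence of the choice of projection sequence is handled by a lattice argument: given two sequences $\{P_n\}$ and $\{P_n'\}$ producing limits $Z$ and $Z'$, the join $R_n:=P_n\vee P_n'$ is still an increasing sequence of projections with $R_n|_\mathcal{H}\uparrow I_\mathcal{H}$, and the same Hilbert-Schmidt bound proves $Z^{R_n}-Z^{P_n}\to 0$ and $Z^{R_n}-Z^{P_n'}\to 0$ in $L^2$, forcing $Z=Z'$. The quadratic variation formula \eqref{e.qv} follows by observing that in the fixed basis $\|\omega(P_nW_s,\cdot)|_{P_n\mathcal{H}}\|^2\to\|\omega(W_s,\cdot)\|_{\mathcal{H}^*}^2$ pointwise, and passing to the limit in $\langle Z^n\rangle_t$ (using dominated convergence together with the $L^2$-convergence $Z^n\to Z$, which forces $\langle Z^n\rangle\to\langle Z\rangle$). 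The moment bound is then a final BDG estimate applied to $Z$, using that $\int_0^T\|\omega(W_s,\cdot)\|_{\mathcal{H}^*}^2\,ds$ has all moments (its expectation is $T^2\|\omega\|_{HS}^2/2$ and it is a second-order chaos variable, so hypercontractivity gives the $L^p$ control). The main obstacle is the Hilbert-Schmidt bookkeeping in the Cauchy step: one has to check that each surviving term in the expansion of $Z^n-Z^m$ involves only indices outside $[m]^2$ and that Remark \ref{r.HS} exactly suffices to control the resulting tail sum uniformly in $n>m$.
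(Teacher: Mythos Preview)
Your proof is correct and follows essentially the same strategy as the paper: It\^o isometry plus the Hilbert--Schmidt bound for the Cauchy step, Doob plus hypercontractivity (Nelson) for the $L^p$ upgrade, and convergence of $\langle Z^n\rangle$ for the quadratic variation. The only substantive difference is how you handle independence of the projection sequence. The paper derives a bound on $\mathbb{E}|Z^P_t-Z^{P'}_t|^2$ valid for \emph{arbitrary} $P,P'\in\mathrm{Proj}(\mathcal{W})$ via the decomposition $\omega(PW,dPW)-\omega(P'W,dP'W)=\omega((P-P')W,dPW)+\omega(P'W,d(P-P')W)$, and then applies it directly to $P_n,P_n'$. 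Your lattice argument with $R_n=P_n\vee P_n'$ is also valid, but note that the basis you use for $R_n\mathcal{H}\supset P_n\mathcal{H}$ varies with $n$, so ``the same Hilbert--Schmidt bound'' really requires the observation that $\mathbb{E}|Z^{R_n}_T-Z^{P_n}_T|^2\le T^2\|Q(I-P_n)\|_{HS}^2$ uniformly in the choice of extension, which then tends to zero; this is true and basis-free, but worth making explicit.
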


\begin{proof}
First note that, for $P$ as in (\ref{e.proj}),
\begin{align*}
\mathbb{E}|Z_t^P|^2
	&= \mathbb{E}\langle Z^P\rangle_t
	= \sum_{j=1}^n \int_0^t \mathbb{E}\left|
		\omega( PW_s,e_j)\right|^2 ds \\
	&= \sum_{j,k=1}^n \int_0^t\int_0^{s_1}
		\left| \omega(e_k,e_j) \right|^2ds_2\,ds_1
	\le \frac{1}{2}t^2 \|\omega\|_{HS}^2 .
\end{align*}
Let $P,P'\in\mathrm{Proj}(\mathcal{W})$, and let $\{h_j\}_{j=1}^N$ be an
orthonormal basis for $P\mathcal{H}+P'\mathcal{H}$.  We then have that
\begin{align}
&\notag\mathbb{E}\left|Z_t^P-Z_t^{P'}\right|^2
	= \mathbb{E}\left|\int_0^t \left(\omega(PW_s,dPW_s)
		- \omega( P'W_s,dP'W_s)\right)\right|^2\\
	&\notag= \mathbb{E}\left|\int_0^t \left(\omega((P-P')W_s,dPW_s)
		+ \omega(P'W_s,d(P-P')W_s)\right)\right|^2\\
	&\notag\le 2 \mathbb{E}\left[\left|\int_0^t \omega((P-P')W_s,dPW_s)
		\right|^2 + \left|\int_0^t
		\omega(P'W_s,d(P-P')W_s)\right|^2\right] \\
	&\notag= t^2 \sum_{j,k=1}^N \left(
		\left| \omega((P-P')h_k,Ph_j) \right|^2
			+
		\left| \omega(P'h_k,(P-P')h_j)
		\right|^2\right)  \\
	&\label{e.aa}
	= t^2 \sum_{j,k=1}^\infty \left(
		\left| \omega((P-P')e_k,Pe_j) \right|^2
			+
		\left| \omega(P'e_k,(P-P')e_j)
		\right|^2\right).
\end{align}
Taking $P=P_n$ and $P'=P_m$ for $m\le n$ gives
\begin{align*}
\mathbb{E}\left[|Z_t^n-Z_t^{m}|^2\right]
	\le t^2 \left(\sum_{j=1}^n \sum_{k=m+1}^n |
			\omega( e_k,e_j)|^2
		+ \sum_{j=m+1}^n \sum_{k=1}^m |			
		\omega(e_k,e_j)|^2\right)
	\rightarrow 0
\end{align*}
as $m,n\rightarrow\infty$ since $\sum_{j,k=1}^\infty |	\omega(e_k,e_j)|^2=\|\omega\|_{HS}^2 <\infty$.
Since the space of continuous $L^2$-martingales on $[0,T]$ is complete in the
norm $N\mapsto\mathbb{E}|N_T|^2$, and, by Doob's maximal inequality, there
exists $c<\infty$ such that
\[ \mathbb{E}\left[\sup_{0\le t\le T} |N_t|^p \right] \le c \mathbb{E}|N_T|^p,
\]
it follows that there exists an $L^2$-martingale $\{Z_t\}_{t\ge0}$ such that
(\ref{e.martingaleLimit}) holds with $p=2$.  For $p>2$, since $Z$ is a chaos
expansion of order 2, a theorem of Nelson (see Lemma 2 of \cite{Nelson73b} and
pp. 216-217 of \cite{Nelson73c}) implies that, for each $j\in\mathbb{N}$, there
exists $c_j<\infty$ such that
\[ \mathbb{E}|Z_t^n-Z_t|^{2j} \le c_j\left(\mathbb{E}|Z_t^n-Z_t|^2\right)^j, \]
and again this combined with Doob's maximal inequality is sufficient to prove
(\ref{e.martingaleLimit}).

One may similarly use (\ref{e.aa}) to show that, for
$\{e_j'\}_{j=1}^\infty\subset \mathcal{H}_*$ another orthonormal basis of
$\mathcal{H}$ and $P_n'$ a corresponding sequence of orthogonal projections,
that
\[ \lim_{n\rightarrow\infty}
	\mathbb{E}\left[\sup_{0\le t\le T} \left|Z_t^{P_n} - Z_t^{P_n'}\right|^p \right] =0 \]
and thus $Z$ is independent of choice of basis.

Since the quadratic variation of $Z^n$ is given by
\[ \langle Z^n \rangle_t
	= \int_0^t |\omega(P_nB_s,dP_nB_s)|^2
	= \int_0^t \sum_{j=1}^n |\omega(P_nB_s,e_j)|^2\,ds \]
and
\begin{align*}
\mathbb{E}|\langle Z\rangle_t-\langle Z^n\rangle_t|
	&\le \sqrt{\mathbb{E}|\langle Z-Z^n\rangle_t| \cdot
		\mathbb{E}|\langle Z+Z^n\rangle_t|} \\
	&= \sqrt{\mathbb{E}| Z_t-Z^n_t|^2 \cdot
		\mathbb{E}|Z_t+Z^n_t|}
	\rightarrow0
\end{align*}
as $n\rightarrow\infty$ and (\ref{e.qv}) follows.

\end{proof}

More general integrals of the form above are considered in
\cite{DriverGordina2008}, in the context of Brownian motions on certain
infinite-dimensional Lie groups, and the above proposition is a special case of
Proposition 4.1 of that reference. In particular, processes like the $Z_t$ defined in Proposition \ref{p.Mn} appear as the central component of hypoelliptic Brownian motions on infinite-dimensional Heisenberg groups with one-dimensional center, and more generally as a term in Brownian motions on infinite-dimensional nilpotent Lie groups.

We give the following basic example of the type of process $Z$ we study here.

\begin{example}\label{ex.workingZt}
Let $q=\{q_j\}_{j=1}^\infty\in\ell^1(\mathbb{R}^+)$ and set
\[
    \mathcal{W} := \ell^2_q(\mathbb{C}):=\left\{v\in \mathbb{C}^\mathbb{N} :
		\sum_{j=1}^\infty q_j|v_j|^2 < \infty\right\}
\]
and
$
    \mathcal{H}=\ell^2(\mathbb{C})$
where both $\mathcal{W}$ and $\mathcal{H}$ are considered as vector spaces
over $\mathbb{R}.$  Then $(\mathcal{W},\mathcal{H})$ determines an abstract
Wiener space (for example, Example 3.9.7 of \cite{Bogachev1998}).
Define $\omega:\mathcal{W}\times\mathcal{W}\rightarrow \mathbb{R}$ by
\[
    \omega(w,w') = \sum_{j=1}^\infty q_j\mathrm{Im}(\bar{w}_jw_j')
	= \sum_{j=1}^\infty q_j(x_jy_j'-y_jx_j')
\]
where $w_j=x_j+iy_j$ for each $j$.  Then for a Brownian motion $W=\{X^j+iY^j\}_{j=1}^\infty$,
where $\{X^j,Y^j\}_{j=1}^\infty$ are independent
standard real-valued Brownian motions, we have
that
\begin{align*}
  Z(t)   &=  \int_0^t \omega\left(
                                W_s,dW_s
                            \right)
	=  \sum_{j=1}^\infty q_j \int_0^t    X_s^jdY_s^j-Y_s^jdX_s^j
\end{align*}
is an infinite weighted sum of independent L\'evy areas. (Note that, since the weights $\{q_j\}$ are $\ell^1$, this expression for $Z$ makes sense. Indeed, in order for the Brownian motion $W$ to make sense on $\mathcal{W}$, these weights must be $\ell^1$. See \cite{Bogachev1998} for more details.)
\end{example}

\begin{remark}
\label{r.qv}
Since $Z$ is a martingale with
\begin{align*}
\langle Z\rangle_t
	&= \int_0^t \|\omega(W_s,\cdot)\|_{\mathcal{H}}^2 \,ds
	= \int_0^t \sum_{j=1}^\infty |\omega(W_s,e_j)|^2\,ds \\
	&= \int_0^t \sum_{j=1}^\infty\sum_{k=1}^\infty
		|W_s^k\omega(e_k,e_j)|^2\,ds
	= \sum_{j=1}^\infty\sum_{k=1}^\infty
		|\omega(e_k,e_j)|^2 \int_0^t  (W_s^k)^2 \,ds \\
	&= \sum_{k=1}^\infty
		\|\omega(e_k,\cdot)\|_{\mathcal{H}}^2 \int_0^t  (W_s^k)^2 \,ds,
\end{align*}
we know there exists a (not necessarily independent) real-valued Brownian motion $B$
such that $Z(t)=B(\langle Z\rangle_t)$ by the Dubins-Schwarz representation (see for example Theorem 34.1 on page 64 of \cite{rogers2000diffusions}).
We will show in the next section that this representation in fact holds with
$B$ an independent Brownian motion.
\end{remark}

\subsection{A representation theorem}
\label{ss.rep}
In this section, we show that $Z\overset{d}{=}B(\langle Z\rangle)$ for an independent
Brownian motion $B$.  This representation is well-known for $Z$ the standard
stochastic L\'evy area for two-dimensional Brownian motion (see for example
Example 6.1 on page 470 of \cite{ikeda1989stochastic}), and was
also proved for more general stochastic integrals of finite-dimensional
Brownian motions in \cite{KuelbsLi2005}.  We summarize the latter result now;
see Section 3 of \cite{KuelbsLi2005} for a proof.

\begin{lemma}\label{l.kuelbsLi3.3}
Let $W$ be a standard Brownian motion in $\mathbb{R}^n$ and
$A$ be a real non-zero skew-symmetric $n\times n$ matrix with
non-zero eigenvalues $\{\pm ia_j\}_{j=1}^r$ (where $2r\le n$ and 0 is an eigenvalue of multiplicity $n-2r$).
For $t>0$, let
\[ L(t) := \int_0^t \langle AW_s,dW_s\rangle \]
and
  \[
    \tilde{L}(t):=B
        \left(
            \sum_{j=1}^r a_j^2\int_0^t (X_s^j)^2+(Y_s^j)^2\,ds
        \right),
    \]
where $B$ and $\{X_j,Y_j\}_{j=1}^{r}$ are independent standard real-valued
Brownian motions.
Then the law of $\{L(t)\}_{t\geq0}$ is equal to the
law of $\{\tilde{L}(t)\}_{t\geq0}$.
\end{lemma}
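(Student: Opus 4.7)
The plan is to reduce to a canonical form for $A$ and then combine the Dubins--Schwarz time-change with the skew-product decomposition of planar Brownian motion to identify the time-change Brownian motion as being independent of the clock. Since $A$ is real skew-symmetric, there is an orthogonal matrix $O$ so that $OAO^T$ is block diagonal with $r$ blocks of the form $a_j\begin{pmatrix} 0 & 1 \\ -1 & 0 \end{pmatrix}$ together with a (possibly trivial) zero block. Because orthogonal transformations preserve standard $n$-dimensional Brownian motion and $\int_0^t \langle AW_s,dW_s\rangle = \int_0^t \langle (OAO^T)(OW_s), d(OW_s)\rangle$, I may assume without loss of generality that $A$ is already in this block form. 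Labeling the non-trivial coordinates as $(X^j, Y^j)$, $j=1,\ldots,r$, one then has
\[ L(t) = \sum_{j=1}^r a_j \int_0^t \left(X_s^j\,dY_s^j - Y_s^j\,dX_s^j\right), \]
which is a continuous martingale with quadratic variation $\langle L\rangle_t = \tau(t) := \sum_{j=1}^r a_j^2 \int_0^t \left((X_s^j)^2 + (Y_s^j)^2\right)ds$, exactly the clock appearing in the definition of $\tilde L$.

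The main obstacle is that the Dubins--Schwarz theorem only furnishes some Brownian motion $\hat B$ with $L = \hat B \circ \tau$, and a priori $\hat B$ is entangled with $\tau$. To upgrade this to an independent time change, I would condition on the $\sigma$-algebra $\mathcal{F}^R$ generated by the radial processes $R^j = \sqrt{(X^j)^2 + (Y^j)^2}$, $j=1,\ldots,r$. The classical skew-product decomposition for each planar factor writes $(X^j, Y^j) = R^j(\cos\theta^j, \sin\theta^j)$, where $\theta^j = \gamma_j \circ \sigma_j$ with $\sigma_j(s) = \int_0^s (R_u^j)^{-2}\,du$ and the $\gamma_j$ are mutually independent standard Brownian motions, jointly independent of $\{R^i\}_{i=1}^r$. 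An It\^o computation on $\arctan(Y^j/X^j)$ yields $X^j\,dY^j - Y^j\,dX^j = (R^j)^2\,d\theta^j$, so
\[ L(t) = \sum_{j=1}^r a_j \int_0^t (R_s^j)^2\, d\gamma_j\!\left(\sigma_j(s)\right). \]

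Conditional on $\mathcal{F}^R$, the right-hand side is a sum of independent It\^o integrals of deterministic integrands against independent Brownian motions $\gamma_j$, hence is a centered Gaussian martingale whose quadratic variation $\tau$ is $\mathcal{F}^R$-measurable. Applying the Dubins--Schwarz theorem conditionally (using that $\tau$ is a.s.\ strictly increasing since each $R^j$ is nonzero off a Lebesgue null set) produces a standard Brownian motion $B$ with $L(\cdot) = B(\tau(\cdot))$ and with $B$ independent of $\mathcal{F}^R$, hence independent of $\tau$. Integrating out the conditioning then identifies the law of $\{L(t)\}_{t\ge 0}$ with that of $\{B(\tau(t))\}_{t\ge 0} = \{\tilde L(t)\}_{t\ge 0}$, proving the lemma. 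The only technical point requiring care is the skew-product representation near the origin of each planar factor, where the angular coordinate is ill-defined; this is standard and is handled by noting that $\{R^j = 0\}$ has Lebesgue measure zero almost surely, so the time changes $\sigma_j$ and the identity $X^j dY^j - Y^j dX^j = (R^j)^2 d\theta^j$ hold in the usual sense after a routine localization.
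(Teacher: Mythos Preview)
The paper does not actually prove this lemma: it states the result and refers the reader to Section~3 of \cite{KuelbsLi2005} for the argument (and to Example~6.1 of \cite{ikeda1989stochastic} for the special case of the L\'evy area). So there is no in-paper proof to compare against.

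Your argument is the standard one and is essentially correct. The orthogonal reduction to block form, the identification of $\langle L\rangle_t$ with the clock $\tau(t)$, and the key step---conditioning on the radial parts so that $L$ becomes a Gaussian martingale with $\mathcal{F}^R$-measurable quadratic variation, whence $B:=L\circ\tau^{-1}$ is a Brownian motion whose conditional law does not depend on $\mathcal{F}^R$---are all sound.

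One technical point deserves more than the passing remark you give it. Because the planar Brownian motions start at the origin, the angular clock $\sigma_j(s)=\int_0^s (R^j_u)^{-2}\,du$ actually diverges as $s\downarrow 0$ (the 2D Bessel process behaves like $\sqrt{s}$ near zero), so the skew product in the form $\theta^j=\gamma_j\circ\sigma_j$ is not literally valid from time~$0$. A cleaner way to organize the same idea, which avoids any localization, is to introduce directly the angular Brownian motion
\[
\beta^j_t:=\int_0^t (R^j_s)^{-1}\bigl(X^j_s\,dY^j_s - Y^j_s\,dX^j_s\bigr),
\]
which is well defined for all $t\ge 0$ since the integrand has unit square-norm. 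One checks that $\langle\beta^j,W^j\rangle\equiv 0$ for the radial Brownian motion $W^j_t=\int_0^t (R^j_s)^{-1}(X^j_s\,dX^j_s+Y^j_s\,dY^j_s)$, so $\beta^j$ is independent of $R^j$, and then $L(t)=\sum_j a_j\int_0^t R^j_s\,d\beta^j_s$ with $\{\beta^j\}$ jointly independent of $\{R^i\}$. Your conditional Dubins--Schwarz argument then applies without modification. This is in the spirit of the Ikeda--Watanabe treatment the paper cites.
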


\begin{remark}
\label{r.Q}
In particular, this lemma implies that each of the finite-dimensional approximations
$Z^n$ to $Z$ has such a representation, in the following way.
By Remark \ref{r.HS}, the continuity assumption for $\omega$ implies that
its restriction to the Cameron-Martin space is Hilbert-Schmidt, and thus the
Riesz representation theorem implies the existence of an anti-symmetric
Hilbert-Schmidt operator $Q=Q_\omega:\mathcal{H}\rightarrow\mathcal{H}$ such that
\[ \omega(h,k) = \langle Q h,k\rangle_\mathcal{H}, \quad \text{ for all }
h,k\in\mathcal{H}. \]
Thus,
\begin{align*}
Z_t^P
	= \int_0^t \omega(PB_s,dPB_s)
	= \int_0^t \langle QPB_s,dPB_s\rangle_\mathcal{H}
	= \int_0^t \langle (PQP)PB_s,dPB_s\rangle_\mathcal{H},
\end{align*}
and we may apply Lemma \ref{l.kuelbsLi3.3} to $Z^P$,
as $PB$ is a Brownian motion on the finite-dimensional space
$P\mathcal{H}\subset \mathcal{H}$
and $A=PQP$ is a skew-symmetric linear operator on $P\mathcal{H}$.
\end{remark}

We will use this representation for the finite-dimensional approximations to show that an
analogous statement is true for $Z$.  First we record the following simple
lemma.

\begin{lemma}
\label{l.HS}
 Let $Q:\mathcal{H}\rightarrow\mathcal{H}$ be a Hilbert-Schmidt operator,
and let $P_n$ be an increasing sequence of orthogonal projections on $\mathcal{H}$ such that
$P_n|_\mathcal{H}\uparrow I_\mathcal{H}$.  Then, as $n \to \infty$,
$P_n Q P_n \to Q$ in Hilbert-Schmidt norm.
\end{lemma}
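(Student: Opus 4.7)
The plan is to decompose
\[
Q - P_n Q P_n = (I-P_n)Q + P_n Q (I-P_n)
\]
and show that each of the two summands tends to zero in Hilbert-Schmidt norm. The hypothesis that $\{P_n\}$ is an increasing sequence of orthogonal projections (with, implicitly from the surrounding context, $P_n \uparrow I_{\mathcal{H}}$) gives that $(I-P_n) \to 0$ strongly on $\mathcal{H}$, and this strong convergence combined with Hilbert--Schmidt summability is what I would exploit via dominated convergence.

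For the first summand, I would fix an orthonormal basis $\{e_j\}_{j=1}^\infty$ of $\mathcal{H}$ and write
\[
\|(I-P_n)Q\|_{HS}^2 = \sum_{j=1}^\infty \|(I-P_n)Q e_j\|_{\mathcal{H}}^2.
\]
For each fixed $j$, strong convergence $P_n \to I$ gives $\|(I-P_n)Q e_j\|_{\mathcal{H}} \to 0$. Since $I-P_n$ is an orthogonal projection, hence a contraction, each summand is dominated by $\|Q e_j\|_{\mathcal{H}}^2$, and the sum $\sum_j \|Q e_j\|_{\mathcal{H}}^2 = \|Q\|_{HS}^2$ is finite by hypothesis. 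Dominated convergence then yields $\|(I-P_n)Q\|_{HS} \to 0$.

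For the second summand, I would take adjoints to move the factor of $(I-P_n)$ to the left: $\|P_n Q(I-P_n)\|_{HS} = \|(I-P_n)Q^* P_n\|_{HS}$. Using the ideal property of the Hilbert--Schmidt class under composition with bounded operators, $\|(I-P_n)Q^* P_n\|_{HS} \le \|(I-P_n)Q^*\|_{HS} \cdot \|P_n\|_{\mathrm{op}} \le \|(I-P_n)Q^*\|_{HS}$. Since $Q^*$ is Hilbert--Schmidt (with the same HS norm as $Q$), the exact argument of the previous paragraph, applied to $Q^*$ in place of $Q$, gives $\|(I-P_n)Q^*\|_{HS} \to 0$.

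Combining the two bounds via the triangle inequality on $\|Q - P_n Q P_n\|_{HS}$ completes the proof. There is no real obstacle here; the only point worth emphasizing is the use of the ideal property of Hilbert--Schmidt operators (that $\|ABC\|_{HS} \le \|A\|_{\mathrm{op}} \|B\|_{HS} \|C\|_{\mathrm{op}}$) to reduce the second term to a one-sided version treatable by the same dominated convergence argument as the first.
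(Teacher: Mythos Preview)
Your proof is correct and rests on the same core mechanism as the paper's: strong convergence $P_n\to I$ combined with the Hilbert--Schmidt summability $\sum_j\|Qe_j\|^2<\infty$ to invoke dominated convergence. The only difference is organizational. The paper chooses an orthonormal basis $\{e_i\}$ adapted to the nested ranges of the $P_n$ (so that $\{e_i\}_{i=1}^{r_n}$ spans $P_n\mathcal{H}$), which makes the sum $\sum_i\|(P_nQP_n-Q)e_i\|^2$ split exactly into $\sum_{i\le r_n}\|(P_n-I)Qe_i\|^2$ plus the tail $\sum_{i>r_n}\|Qe_i\|^2$; the tail is handled directly and the first piece by dominated convergence. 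You instead use the algebraic decomposition $Q-P_nQP_n=(I-P_n)Q+P_nQ(I-P_n)$ with an arbitrary basis, then pass to adjoints and the ideal property to reduce the second summand to the first. Your route avoids the need for an adapted basis at the cost of invoking $Q^*$ and the HS ideal inequality; the paper's route is slightly more elementary but basis-dependent. Either way the substance is identical.
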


\begin{proof}
Let $\{e_i\}_{i=1}^\infty$ be an orthonormal basis of $\mathcal{H}$ so that
$\{e_i\}_{i=1}^{r_n}$ is an orthonormal basis of $P_n\mathcal{H}$.  We have
  \begin{align*}
\|P_n Q P_n - Q\|_{HS}^2
	&= \sum_{i=1}^\infty \|(P_n Q P_n -
    		Q)e_i\|_\mathcal{H}^2 \\
    &= \sum_{i=1}^{r_n} \|(P_n - I) Q e_i\|_H^2 +
    \sum_{i=r_n+1}^\infty \|Q e_i\|_\mathcal{H}^2 \\
    &\le \sum_{i=1}^{\infty} \|(P_n - I) Q e_i\|_\mathcal{H}^2 +
    \sum_{i=r_n+1}^\infty \|Q e_i\|_\mathcal{H}^2.
  \end{align*}
  The second term goes to zero since it is the tail of the convergent
  sum $\sum_{i=1}^\infty \|Q e_i\|_\mathcal{H}^2 = \|Q\|_{HS}^2 < \infty$.  For
  the first term, we may use the dominated convergence theorem: since
  $P_n \to I$ strongly we have $\|(P_n - I) Q e_i\|_\mathcal{H}^2 \to 0$ for
  each $i$, and
	$\|(P_n - I) Q
  e_i\|_\mathcal{H}^2 \le 4 \|Q e_i\|_\mathcal{H}^2$ which is summable.
\end{proof}

Now we may prove the desired representation for $Z$.

\begin{theorem}\label{t.Representation}
Let  $Z(t) = \int_0^t \omega(W_s,dW_s)$ be
as defined in Proposition \ref{p.Mn}, and let $Q=Q_\omega$ be the linear
operator on $\mathcal{H}$ such that $\omega(h,k)=\langle
Qh,k\rangle_\mathcal{H}$ for all $h,k\in\mathcal{H}$ as in Remark \ref{r.Q}.
Let $\{X^j,Y^j\}_{j=1}^\infty$ be independent
standard real-valued Brownian motions, $\{\pm iq_j\}_{j=1}^\infty$ be the
eigenvalues of  $Q$ so that $\{q_j\}_{j=1}^\infty$ is ordered from largest to smallest,
and define for $t\ge0$
\[
    C(t):=\sum_{j=1}^\infty q_j^2\int_0^t (X_s^j)^2+(Y_s^j)^2\,ds.
\]
(Note that $C(t)$ is well-defined and finite almost surely for each $t$.)
Then the
  law of $\{Z(t)\}_{t\geq0}$ is equal to the law of $\{\tilde{Z}(t)\}_{t\geq0}$ where
  $\tilde{Z}(t)= B(C(t))$ for $B$ a standard Brownian motion independent of
$\{X^j,Y^j\}_{j=1}^\infty$.
\end{theorem}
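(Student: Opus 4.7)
The plan is to establish the representation for $Z$ by taking a limit of the corresponding representations for the finite-rank approximations $Z^n := Z^{P_n}$, which are handled directly by Lemma~\ref{l.kuelbsLi3.3}. Concretely, I would fix an increasing sequence $\{P_n\}\subset\mathrm{Proj}(\mathcal{W})$ with $P_n|_{\mathcal{H}}\uparrow I_{\mathcal{H}}$ and set $A_n := P_n Q P_n$. Because $A_n$ is a finite-rank real anti-symmetric operator, its nonzero eigenvalues come in conjugate pairs $\{\pm i q_j^{(n)}\}$ with $q_j^{(n)}\ge 0$ arranged in decreasing order. Lemma~\ref{l.kuelbsLi3.3} together with Remark~\ref{r.Q} then yields $Z^n \deq \tilde{Z}^n$, where $\tilde{Z}^n(t) := B(C^n(t))$ and
\[
    C^n(t) := \sum_{j} (q_j^{(n)})^2 \int_0^t \bigl((X_s^j)^2 + (Y_s^j)^2\bigr)\,ds;
\]
since Lemma~\ref{l.kuelbsLi3.3} only prescribes joint distributions, I can take the driving $B$ and $\{X^j,Y^j\}$ to be the same independent standard Brownian motions used in the definition of $\tilde{Z}$. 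Well-definedness of $C(t)$ follows from $\mathbb{E}[C(t)] = t^2 \sum_j q_j^2 = \tfrac{1}{2}t^2\|Q\|_{HS}^2 < \infty$.

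The technical heart is spectral stability. Lemma~\ref{l.HS} yields $A_n\to Q$ in Hilbert--Schmidt norm, and the Hoffman--Wielandt (Lidskii) inequality for singular values of Hilbert--Schmidt operators then gives
\[
    \sum_j \bigl(s_j(A_n) - s_j(Q)\bigr)^2 \le \|A_n - Q\|_{HS}^2 \longrightarrow 0.
\]
Since the singular values of $A_n$ and $Q$ are $\{q_j^{(n)}\}$ and $\{q_j\}$ (each with multiplicity two), this forces $q_j^{(n)}\to q_j$ for every fixed $j$ (padding by zero past the rank of $A_n$). Combined with $\sum_j (q_j^{(n)})^2 = \tfrac{1}{2}\|A_n\|_{HS}^2 \to \tfrac{1}{2}\|Q\|_{HS}^2 = \sum_j q_j^2$, Scheff\'e's lemma applied to the counting measure on $\mathbb{N}$ delivers
\[
    \sum_{j=1}^\infty \bigl|(q_j^{(n)})^2 - q_j^2\bigr| \longrightarrow 0.
\]
Using $\mathbb{E}\int_0^t \bigl((X_s^j)^2+(Y_s^j)^2\bigr)\,ds = t^2$, this immediately produces $\mathbb{E}|C^n(t) - C(t)|\to 0$ for each $t$, and hence joint convergence in probability of $(C^n(t_1),\ldots,C^n(t_k))$ to $(C(t_1),\ldots,C(t_k))$ along any finite collection of times.

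To close, almost-sure continuity of $B$ propagates the above to joint convergence in probability $\tilde{Z}^n \to \tilde{Z}$ in finite-dimensional distributions, while Proposition~\ref{p.Mn} supplies $L^2$ convergence $Z^n \to Z$, hence convergence of $Z^n$ in finite-dimensional distribution to $Z$. Passing to the limit in the identity $Z^n \deq \tilde{Z}^n$ then gives $Z \deq \tilde{Z}$ in finite-dimensional distributions, and the almost-sure continuity of both processes upgrades this to equality of laws on path space. The main obstacle is the spectral step: pairing the eigenvalues of $A_n$ with those of $Q$ carefully enough to secure the $\ell^1$-convergence of their squares, which is exactly what the combination of the Hilbert--Schmidt convergence from Lemma~\ref{l.HS}, operator-norm continuity of the spectrum of a compact skew-adjoint operator, and Scheff\'e's lemma is designed to deliver; everything else follows readily from Proposition~\ref{p.Mn} and continuity of Brownian motion.
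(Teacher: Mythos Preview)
Your proposal is correct and follows essentially the same strategy as the paper: approximate by $Z^n$, invoke Lemma~\ref{l.kuelbsLi3.3} to get $Z^n\deq\tilde Z^n=B(C^n)$, establish $\mathbb{E}|C^n(t)-C(t)|\to 0$ via spectral stability of $P_nQP_n\to Q$ in Hilbert--Schmidt norm, and pass to the limit. The paper's only variations are in the tools: for the spectral step it uses the min--max theorem to obtain the monotonicity $q_{nj}\le q_j$ directly (so that $\sum_j|q_j^2-q_{nj}^2|=\sum_j q_j^2-\sum_j q_{nj}^2\to 0$) rather than Hoffman--Wielandt plus Scheff\'e, and for the final identification it argues via tightness of $\{\tilde Z^n\}$ rather than your observation that equality of finite-dimensional distributions between two a.s.\ continuous processes already pins down the law on path space.
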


\begin{proof}
    Let $\{P_n\}_{n=1}^\infty\subset\mathrm{Proj}(W)$ be such that
$P_n|_\mathcal{H}\uparrow I_\mathcal{H}$ and
    \[
        Z^n(t)   = \int_0^t \omega(P_nW_s,dP_nW_s)
	= \int_0^t \langle (P_nQP_n)P_nW_s,dP_nW_s\rangle_\mathcal{H},
    \]
as in Proposition \ref{p.Mn},
Then Lemma \ref{l.kuelbsLi3.3} implies that, for each  $n$,
the law of $\{Z^n(t)\}_{t\geq0}$ is equal to
    the law of $\{\tilde{Z}^n(t)\}_{t\geq0}$ where
    \[
        \tilde{Z}^n(t):= B(C_t^n)
		:= B\left(
            \sum_{j=1}^{r_n}
                q_{nj}^2 \int_0^t(X_s^j)^2+(Y_s^j)^2\ ds
        \right)
    \]
where $\{\pm iq_{nj}\}_{j=1}^{r_n}$ are the non-zero eigenvalues of
$P_nQP_n$.  For each $n$, we will assume the $q_{nj}$ are ordered in $j$
from largest to smallest.
Clearly, Proposition \ref{p.Mn} and in particular
(\ref{e.martingaleLimit})
imply that
$Z^n\Rightarrow Z$ and the collection $\{Z^n\}_{n=0}^\infty$
is tight.  Equality in distribution then implies that
$\tilde{Z}^n\Rightarrow Z$ and $\{\tilde{Z}^n\}_{n\geq0}$
is tight.

Now we also have that, for each fixed $t>0$,
\begin{align*}
\mathbb{E}|&\tilde{Z}(t)-\tilde{Z}^n(t)|^2
	= \mathbb{E}[\mathbb{E}[|\tilde{Z}(t)-\tilde{Z}^n(t)|^2 |C,C^n]]
	= \mathbb{E}|C(t)-C^n(t)| \\
	&= \mathbb{E}\left|\sum_{j=1}^\infty q_j^2\int_0^t (X^j_s)^2+(Y^j_s)^2\,ds
		- \sum_{j=1}^{r_n} q_{nj}^2\int_0^t (X^j_s)^2+(Y^j_s)^2\,ds\right|
		\\
	&= \mathbb{E}\left|\sum_{j=1}^{r_n} (q_j^2-q_{nj}^2)\int_0^t (X^j_s)^2+(Y^j_s)^2\,ds
		- \sum_{j=r_n+1}^\infty q_j^2\int_0^t (X^j_s)^2+(Y^j_s)^2\,ds\right|
		\\
	&\le \mathbb{E}\left|\sum_{j=1}^{r_n} (q_j^2-q_{nj}^2)\int_0^t
			(X^j_s)^2+(Y^j_s)^2\,ds \right|
		+ \mathbb{E}\left| \sum_{j=r_n+1}^\infty q_j^2\int_0^t
		(X^j_s)^2+(Y^j_s)^2\,ds\right|.
\end{align*}
Note that
\[ \|Q_n\|_{HS}^2 = 2\sum_{j=1}^{r_n} q_{nj}^2  \quad\text{and}\quad
	\|Q\|_{HS}^2 = 2\sum_{j=1}^\infty q_j^2<\infty. \]
Thus, for the second term,
\begin{align*}
\mathbb{E}\left| \sum_{j=r_n+1}^\infty q_j^2\int_0^t (X^j_s)^2+(Y^j_s)^2\,ds\right|
	&= t^2 \sum_{j=r_n+1}^\infty q_j^2 \rightarrow 0,
\end{align*}
clearly, since this is the tail of a convergent sequence. For the first term,
\begin{align*}
\mathbb{E}\left|\sum_{j=1}^{r_n} (q_j^2-q_{nj}^2)\int_0^t
			(X^j_s)^2+(Y^j_s)^2\,ds \right|
	&\le t^2\sum_{j=1}^{r_n} |q_j^2-q_{nj}^2|
	= t^2\sum_{j=1}^{r_n} (q_j^2-q_{nj}^2) \\
	&\le t^2\left(\sum_{j=1}^\infty q_j^2 - \sum_{j=1}^{r_n} q_{nj}^2\right)
	\rightarrow 0,
\end{align*}
where the equality follows from the min-max theorem which implies that
\begin{align*}
q_{nj}
	&= \sup_{\tiny{\begin{array}{cc}S\subset H \\
		\mathrm{dim}(S)=j\end{array}}} \min_{h\in S} \frac{\|Q_nh\|}{\|h\|} \\
	&\le \sup_{\tiny{\begin{array}{cc}S\subset H \\
		\mathrm{dim}(S)=j\end{array}}} \min_{h\in P_nS} \frac{\|Qh\|}{\|h\|}
	= \sup_{\tiny{\begin{array}{cc}S\subset P_nH \\
		\mathrm{dim}(S)=j\end{array}}} \min_{h\in S} \frac{\|Qh\|}{\|h\|}
	\le q_j
\end{align*}
and the limit follows from Lemma \ref{l.HS} which implies that the
Hilbert-Schmidt norms of $Q_n$ converge to the Hilbert-Schmidt norm of $Q$.
Thus, for any $(t_1,\ldots,t_m)\in(\mathbb{R}^+)^m$,
    \[
      \E\left|(\tilde{Z}_{t_1}^n,\ldots,\tilde{Z}_{t_m}^n)
            -(\tilde{Z}_{t_1},\ldots,\tilde{Z}_{t_m})\right|^2
      = \sum_{i=1}^m
      \E|\tilde{Z}^n(t_i)-\tilde{Z}(t_i)|^2\rightarrow0
    \]
as $n\rightarrow\infty$, and
the finite-dimensional distributions of $\tilde{Z}^n$ converge to
those of $\tilde{Z}$. Combining this with the tightness of
$\{\tilde{Z}^n\}$ implies that $\tilde{Z}^n\Rightarrow\tilde{Z}$. However, since
$\tilde{Z}^n\Rightarrow Z$ also, it must be that $\{\tilde{Z}(t)\}_{t\geq0}$ and
    $\{Z(t)\}_{t\geq0}$ are equal in distribution.
\end{proof}

Noting that, for $\{e_k\}_{k=1}^\infty$ an orthonormal basis of $\mathcal{H}$,
\[ \sum_{k=1}^\infty \|\omega(e_k,\cdot)\|_{\mathcal{H}}^2
	= \sum_{k=1}^\infty \|\langle Qe_k,\cdot\rangle_{\mathcal{H}}\|_{\mathcal{H}}^2
	= \sum_{k=1}^\infty \|Qe_k\|_\mathcal{H}^2
	= 2 \sum_{k=1}^\infty q_k^2,
\]
we see that indeed $C(t)=\langle Z\rangle_t$ up to a reordering of terms.
Given this last theorem, in order to prove small deviations for $Z$, it suffices to
prove them for $\tilde{Z}$.  The results of Section \ref{s.SB} lead us to find
a small ball estimate for the process $\langle Z\rangle$.

\subsection{Small deviations for $\langle Z\rangle_t$ and applications}
\label{ss.smallBall}

Note again that, as in Remark \ref{r.qv} we may write $ \langle Z\rangle_t = \sum_{k=1}^\infty
\|\omega(e_k,\cdot)\|_{\mathcal{H}}^2\xi_k(t)$ where
$\{\xi_k\}_{k=1}^\infty$ are i.i.d.~copies of
\begin{equation}
\label{e.111} \xi(t) := \int_0^t B_s^2 \,ds.
\end{equation}
Recall that, if $\{\zeta_j\}_{j=1}^m$ are
independent positive random variables satisfying small ball
estimates with the same exponents $\alpha$ and $\beta$ for coefficients
$\{K_j\}_{j=1}^m$, then
\begin{multline*}
\lim_{\lambda\rightarrow\infty}
	\lambda^{-\alpha/(1+\alpha)}(\log\lambda)^{\beta/(1+\alpha)}
	\log\E\left[e^{-\lambda \sum_{j=1}^m \zeta_j }\right] \\
	= -(1+\alpha)^{1+\beta/(1+\alpha)}\alpha^{-\alpha/(1+\alpha)}
\sum_{j=1}^m K_j^{1/(1+\alpha)}
\end{multline*}
and equivalently
\[ \lim_{\varepsilon\downarrow 0}
	\varepsilon^{\alpha} |\log\varepsilon|^{\beta} \log \P\left(\sum_{j=1}^m
		\zeta_j \le \varepsilon\right)
	= - \left(\sum_{j=1}^m K_j^{1/(1+\alpha)}\right)^{(1+\alpha)}. \]
In particular, if $\{\eta_j\}_{j=1}^m$ are positive i.i.d.~random variables satisfying small ball
estimates with $K_j=K$ for each
$j$ and $\zeta_j=a_j\eta_j$ for some $a_j>0$, then we have that
\[ \lim_{\varepsilon\downarrow 0}
	\varepsilon^{\alpha} |\log\varepsilon|^{\beta} \log \P\left(\sum_{j=1}^m
		a_j\eta_j \le \varepsilon\right)
	= - K \left(\sum_{j=1}^m a_j^{\alpha/(1+\alpha)}\right)^{(1+\alpha)}. \]
Equivalently,
\begin{multline}
\label{e.EC2}    \limsup_{\lambda\rightarrow\infty}
        \lambda^{-\alpha/(1+\alpha)}(\log\lambda)^{\beta/(1+\alpha)}\log\mathbb{E}
            \left[
                e^{-\lambda\sum_{j=1}^\infty a_j \zeta_j}
            \right] \\
	= -(1+\alpha)^{1+\beta/(1+\alpha)}\alpha^{-\alpha/(1+\alpha)}
		K^{1/(1+\alpha)} \sum_{j=1}^m a_j^{\alpha/(1+\alpha)}.
\end{multline}
In the event this sum is actually infinite with a summable sequence of
coefficients $\{a_j\}_{j=1}^\infty$, analogous results hold under  additional requirements on the
coefficients.  Small deviations of random variables of the form
\[
    S=\sum_{j=1}^\infty a_j\zeta_j
\]
where $\{a_j\}\in\ell^1(\mathbb{R}^+)$ 
and $\{\zeta_j\}$
are non-negative i.i.d.~random variables, have  been studied in
\cite{aurzada2008short,borovkov2008small,mayer1993probability,rozovsky2011small,rozovskii2007small,lifshits1997lower}.
In particular, we present the following theorem (Theorem 3.1 of \cite{borovkov2008small}) without proof.

\begin{theorem}\label{t.BR3.1}
Suppose that $\zeta$ is a non-negative random variable such that there exist
$\alpha>0$ and a slowly varying function $L$ so that
  \[
    \log\P(\zeta<\varepsilon)
\sim -\varepsilon^{-\alpha}L(\varepsilon)
  \]
as $\varepsilon\downarrow0$, and there exist $\kappa, \delta>0$ so that
\[
    \E\left[\zeta^{(1/(\gamma+\kappa))+\delta}\right]<\infty,
  \]
where $\gamma=\frac{1+\alpha}{\alpha}$.
Then, given a sequence $\{a_j\}_{j=1}^\infty\subset\mathbb{R}^+$ such that
$a_j=O(j^{-(\gamma+\kappa)})$ and $\{\zeta_j\}_{j=1}^\infty$
i.i.d.~copies of $\zeta$,
  \[
     \log\P\left(\sum_{j=1}^\infty
a_j\zeta_j<\varepsilon\right)
	\sim -\left(\sum_{j=1}^\infty
    a_j^{\alpha/(1+\alpha)}\right)^{(1+\alpha)}\varepsilon^{-\alpha}L(\varepsilon)
  \]
as $\varepsilon\downarrow0$.
\end{theorem}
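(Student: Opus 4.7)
The plan is to convert the small-ball hypothesis on $\zeta$ to a Laplace transform asymptotic via the exponential Tauberian theorem (Theorem~\ref{t.tauberian}), use independence to write $-\log\E[e^{-\lambda S}]$ as a sum over $j$, and then establish the desired asymptotic by splitting this sum according to the size of $\lambda a_j$. The ``head'' (large $\lambda a_j$) produces the leading term via the single-variable Laplace asymptotic; the ``tail'' (small $\lambda a_j$) is controlled by the moment hypothesis. Inverting via Tauberian once more will recover the stated small-ball estimate.

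More precisely, the hypothesis on $\zeta$ is equivalent, by the slowly-varying form of Theorem~\ref{t.tauberian}, to
\[ \psi(\mu) := -\log\E[e^{-\mu\zeta}] \sim c_\alpha\,\mu^{1/\gamma}\,\ell(\mu)\quad\text{as } \mu\to\infty,\]
where $c_\alpha = (1+\alpha)\alpha^{-\alpha/(1+\alpha)}$, $1/\gamma = \alpha/(1+\alpha)$, and $\ell$ is a slowly varying function determined by $L$. By independence, $-\log\E[e^{-\lambda S}] = \sum_{j\ge1}\psi(\lambda a_j)$, and the target is
\[-\log\E[e^{-\lambda S}] \sim c_\alpha\Bigl(\sum_{j\ge1} a_j^{1/\gamma}\Bigr)\lambda^{1/\gamma}\ell(\lambda).\]
Note $\sum_j a_j^{1/\gamma}$ converges because $a_j^{1/\gamma} = O(j^{-(\gamma+\kappa)/\gamma})$ with $(\gamma+\kappa)/\gamma > 1$.

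Fix $\eta\in(0,1)$ and choose $M=M(\eta)$ so that $(1-\eta)c_\alpha\mu^{1/\gamma}\ell(\mu)\le\psi(\mu)\le(1+\eta)c_\alpha\mu^{1/\gamma}\ell(\mu)$ for $\mu\ge M$. On the head $\{j : \lambda a_j\ge M\}$, Potter's inequalities for the slowly varying $\ell$ permit replacing $\ell(\lambda a_j)$ by $\ell(\lambda)$ uniformly up to a factor $1+o(1)$, and summing delivers $c_\alpha\lambda^{1/\gamma}\ell(\lambda)\sum_{j\le N_M} a_j^{1/\gamma}$; as $\lambda\to\infty$, the head index $N_M\sim(\lambda/M)^{1/(\gamma+\kappa)}$ grows to infinity and the partial sum converges to $\sum_{j\ge1} a_j^{1/\gamma}$. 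On the tail $\{j : \lambda a_j < M\}$, use the elementary inequality $1-e^{-x}\le x^p$ valid for $x\ge0$ and $p\in(0,1]$, applied with $p = 1/(\gamma+\kappa)+\delta$ (shrinking $\delta$ if necessary so that $p<1/\gamma$), to get $\psi(\mu)\le 2\mu^p\E[\zeta^p]$ for $\mu$ in a neighborhood of zero; a short calculation then gives a tail bound of order $\lambda^p\sum_{j>N_M} a_j^p = O(\lambda^{1/(\gamma+\kappa)})$.

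The main obstacle is arranging uniform control over the slowly varying factor $\ell$ on the head and verifying the tail contribution is strictly of smaller order than the leading term. The first is handled by Potter bounds; the second hinges on the strict inequality $1/(\gamma+\kappa) < 1/\gamma$, which is precisely the margin secured by $\kappa>0$ in the hypothesis $a_j = O(j^{-(\gamma+\kappa)})$ and matched by the moment condition $\E[\zeta^p]<\infty$. Once matching asymptotic upper and lower bounds on $-\log\E[e^{-\lambda S}]$ are in hand, applying Theorem~\ref{t.tauberian} in reverse yields the stated small-ball asymptotic.
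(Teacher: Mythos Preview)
The paper does not prove this statement: it is quoted without proof as Theorem~3.1 of \cite{borovkov2008small}. Your outline is essentially the standard route to such results (and is close to what Borovkov--Ruzankin do): pass to the Laplace transform via the exponential Tauberian theorem, write $-\log\E[e^{-\lambda S}]=\sum_j\psi(\lambda a_j)$ by independence, split according to the size of $\lambda a_j$, use the single-variable asymptotic on the head and the moment bound $\psi(\mu)\le C\mu^p\E[\zeta^p]$ on the tail, and invert.

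One step needs more care. Your assertion that Potter's bounds let you replace $\ell(\lambda a_j)$ by $\ell(\lambda)$ ``uniformly up to a factor $1+o(1)$'' over the head is too strong. On the head, $\lambda a_j$ ranges over $[M,\lambda a_1]$, and for, say, $\ell(x)=\log x$ one has $\ell(M)/\ell(\lambda)\to 0$, not $1$; so the ratio $\ell(\lambda a_j)/\ell(\lambda)$ is not uniformly close to $1$. What Potter actually gives is, for any $\epsilon>0$ and both arguments large,
\[
\frac{\ell(\lambda a_j)}{\ell(\lambda)}\le (1+\epsilon)\max\bigl(a_j^{\epsilon},a_j^{-\epsilon}\bigr).
\]
This is enough: summing yields the upper bound $(1+\eta)(1+\epsilon)\,c_\alpha\,\lambda^{1/\gamma}\ell(\lambda)\sum_{j} a_j^{1/\gamma-\epsilon}$ on the head contribution, and the decay hypothesis $a_j=O(j^{-(\gamma+\kappa)})$ guarantees $\sum_j a_j^{1/\gamma-\epsilon}<\infty$ for all sufficiently small $\epsilon$. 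Dominated convergence then gives $\sum_j a_j^{1/\gamma-\epsilon}\to\sum_j a_j^{1/\gamma}$ as $\epsilon\downarrow 0$, recovering the correct constant after letting $\epsilon,\eta\to 0$. The matching lower bound is easier and avoids this issue entirely: keep only the first $N$ terms (finitely many fixed $a_j$, so $\ell(\lambda a_j)/\ell(\lambda)\to 1$ for each), and let $N\to\infty$ at the end. With this correction your argument goes through.
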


\begin{remark}
\label{r.1}
For any $\{a_j\}\in\ell^1(\mathbb{R}^+)$, we easily obtain the upper bound for
(\ref{e.EC2})
in the following way. First, note that
    \[
        \E\left[e^{-\lambda\sum_{j=1}^\infty a_j\zeta_j}\right]
        =
        \prod_{j=1}^\infty \E\left[e^{-\lambda a_j\zeta_j}\right]
    \]
by independence and bounded convergence.    Thus,
    \begin{align*}
      \limsup_{\lambda\rightarrow\infty}
        \lambda^{-\alpha/(1+\alpha)}&
		(\log\lambda)^{\beta/(1+\alpha)}\log\E\left[e^{-\lambda\sum_{j=1}^\infty
a_j\zeta_j}\right] \\
      &\leq  \sum_{j=1}^\infty
        \limsup_{\lambda\rightarrow\infty}
            \lambda^{-\alpha/(1+\alpha)}(\log\lambda)^{\beta/(1+\alpha)}\log
                \E\left[e^{-\lambda a_j\zeta_j}\right] \\
      &= -(1+\alpha)^{1+\beta/(1+\alpha)}\alpha^{-\alpha/(1+\alpha)}
		K^{1/(1+\alpha)}\sum_{j=1}^\infty a_i^{\alpha/(1+\alpha) }
    \end{align*}
    by reverse Fatou's lemma for non-positive functions.
\end{remark}

\begin{remark}
\label{r.2}
  When $\{a_j\}\in\ell^1(\mathbb{R}^+)$ are such that $a_j\le \tilde{a}_j$ for
$\{\tilde{a}_j\}$
a sequence as in Theorem \ref{t.BR3.1},
then we may easily obtain a lower bound in terms of $\{\tilde{a}_j\}$.
  Since $a_j\leq \tilde{a}_j,$
  \[
    S:=\sum_{j=1}^\infty a_j\zeta_j
    \leq
    \sum_{j=1}^\infty \tilde{a}_j\zeta_j=:\tilde{S}
  \] and thus
  $\P(\tilde{S}\leq\varepsilon)\leq\P(S\leq\varepsilon).$
  It follows that
  \begin{align*}
    \liminf_{\varepsilon\downarrow0}
		\varepsilon^\alpha \log \P(S\leq\varepsilon)
        &\geq \liminf_{\varepsilon\downarrow0}
		\varepsilon^\alpha \log \P(\tilde{S}\leq\varepsilon)
        =  -K\left(\sum_{j=1}^\infty
        	\tilde{a}_j^{\alpha/(1+\alpha)} \right)^{(1+\alpha)}.
  \end{align*}
Similarly,
\begin{align*}
    \liminf_{\lambda\rightarrow\infty}
	\lambda^{-\alpha/(1+\alpha)} \log\E\left[e^{-\lambda
    S^q}\right]
	&\ge \liminf_{\lambda\rightarrow\infty}
		\lambda^{-\alpha/(1+\alpha)} \log\E\left[e^{-\lambda
	    S^a}\right] \\
		&= -(1+\alpha)\alpha^{-\alpha/(1+\alpha)}
			K\sum_{j=1}^\infty a_j^{\alpha/(1+\alpha)} .
\end{align*}
\end{remark}

\begin{prop}\label{p.kuelbsLiLemma4.1}
Let $Z(t)=\int_0^t \omega(W_s,dW_s)$ be as in Proposition \ref{p.Mn}, and
suppose that $\|\omega(e_j,\cdot)\|_{\mathcal{H}}=O(j^{-r})$ for
$r>1$.
Then $C=\langle Z\rangle$ satisfies
Assumption \ref{a.a} with $\alpha=1$, $\beta=0$, and
\[ K(t_{i-1},t_i) = \frac{1}{8} \|\omega\|_1^2 (\Delta_i t)^2 \] where $\Delta_i t:=t_i-t_{i-1}$
and
\[ \|\omega\|_1 := \sum_{j=1}^\infty
	\|\omega(e_j,\cdot)\|_{\mathcal{H}}<\infty. \]
That is, for any $m\in\mathbb{N}$,
$0=t_0<t_1<\cdots<t_m$, and $\{d_i\}_{i=1}^m$ a decreasing sequence,
\begin{equation*}
    \lim_{\varepsilon\downarrow 0}\varepsilon\log
    \P\left(
        \sum_{i=1}^m d_i \Delta_i \langle Z\rangle\leq\varepsilon
    \right)=
            -\frac{1}{8}\|\omega\|_1^2 \left(\sum_{i=1}^m d_i^{1/2}\Delta_i t\right)^2.
\end{equation*}
\end{prop}

\begin{proof}
By Equation (\ref{e.111}), we have that $\sum_{i=1}^m d_i \Delta_i C = \sum_{j=1}^\infty
\|\omega(e_j,\cdot)\|_{\mathcal{H}}^2\zeta_j$ where $\{\zeta_j\}_{j=1}^\infty$ are i.i.d.~copies of
\[ \zeta := \sum_{i=1}^m d_i \int_{t_{i-1}}^{t_i} (B_s)^2 \,ds. \]
Theorem \ref{t.6.4} implies that
\begin{equation*}
  \lim_{\varepsilon\downarrow0}
	\varepsilon \log
\P\left(\zeta\leq\varepsilon\right)
	=-\frac{1}{8}\left(\sum_{i=1}^m d_i^{1/2}\Delta_i t\right)^2.
\end{equation*}
Thus, under the assumptions on $\omega$, the desired result follows from
Theorem \ref{t.BR3.1}
with $\alpha=1$ and $a_j=\|\omega(e_j,\cdot)\|_\mathcal{H}^2$.
\end{proof}

Now combining this result with Theorem \ref{t.Representation}
and Theorem \ref{t.SB} with $\alpha=1$, $\beta=0$, and
$K(t_{i-1},t_i)=\frac{1}{2}\|\omega\|_1^2(\Delta_it)^2$ immediately yields the
following.

\begin{theorem}\label{t.smallBall}
Let $Z(t)=\int_0^t \omega(W_s,dW_s)$ with $\|\omega(e_j,\cdot)\|_{\mathcal{H}}=O(j^{-r})$ for $r>1$.
Then, for any $m\in\mathbb{N}$,
$0=t_0<t_1<\cdots<t_m$, and
$0\leq a_1<b_1\leq a_2<b_2\leq\cdots\leq a_m<b_m$,
    \begin{equation*}
      \lim_{\varepsilon\downarrow0}
            \varepsilon\log P\left( \bigcap_{i=1}^m
                               \{ a_i\varepsilon\leq \sup_{0\leq s\leq
t_i}|Z(s)|\leq b_i\varepsilon \}                            \right)
                        =
                            -\frac{\pi}{4}\|\omega\|_1\sum_{i=1}^m\frac{\Delta_i
t}{b_i}.
    \end{equation*}
\end{theorem}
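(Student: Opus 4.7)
The plan is to assemble three earlier results: the representation $Z \overset{d}{=} B \circ C$ from Theorem \ref{t.Representation}, the small-ball asymptotics for the clock $C = \langle Z \rangle$ from Proposition \ref{p.kuelbsLiLemma4.1}, and the general small-deviations theorem for time-changed Brownian motion, Theorem \ref{t.SB}. Since both sides of the claimed equality are laws of $Z$ (or, equivalently, of $\sup_{s \le t}|Z(s)|$), the first move is to replace $Z$ by $\tilde Z(t) = B(C(t))$ with $B$ a standard real Brownian motion independent of $C$; Theorem \ref{t.Representation} guarantees this does not change the probability in question.

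Next I would verify that the clock $C$ satisfies Assumption \ref{a.a} with the specific parameters needed. This is precisely the content of Proposition \ref{p.kuelbsLiLemma4.1}, which, under either polynomial or geometric decay of $\|\omega(e_k,\cdot)\|_{\mathcal{H}^*}$, gives Assumption \ref{a.a} with
\[
\alpha = 1, \qquad \beta = 0, \qquad K(t_{i-1},t_i) = \tfrac{1}{8}\|\omega\|_1^2 (\Delta_i t)^2.
\]
In particular, the summability hypothesis $\|\omega\|_1 < \infty$ used throughout the statement is built into Proposition \ref{p.kuelbsLiLemma4.1}.

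With these parameters in hand, the conclusion follows by direct substitution into Theorem \ref{t.SB} applied to $\tilde Z = B \circ C$. Plugging $\alpha = 1$, $\beta = 0$ into the prefactor gives
\[
2^{-\beta/(1+\alpha)}(1+\alpha)^{1+\beta/(1+\alpha)}\left(\frac{\pi^2}{8\alpha}\right)^{\alpha/(1+\alpha)} = 1 \cdot 2 \cdot \frac{\pi}{2\sqrt 2} = \frac{\pi}{\sqrt 2},
\]
while the $i$-th summand becomes
\[
\left(\frac{K(t_{i-1},t_i)}{b_i^{2\alpha}}\right)^{1/(1+\alpha)} = \left(\frac{\|\omega\|_1^2 (\Delta_i t)^2}{8 b_i^2}\right)^{1/2} = \frac{\|\omega\|_1}{2\sqrt 2}\cdot\frac{\Delta_i t}{b_i}.
\]
Multiplying yields the constant $\pi/4$ and the sum $\|\omega\|_1 \sum_i \Delta_i t / b_i$, matching the right-hand side of the theorem.

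The calculation itself is purely algebraic once the ingredients are in place, so there is no real obstacle at this final stage. The only thing worth double-checking is bookkeeping: that the quadratic-variation process computed from (\ref{e.qv}) is literally the process $C$ appearing in Proposition \ref{p.kuelbsLiLemma4.1} (a rearrangement argument is already noted in the excerpt, identifying $\sum_k \|\omega(e_k,\cdot)\|_{\mathcal{H}^*}^2 \int_0^t (W_s^k)^2\,ds$ with $\langle Z\rangle_t$ up to reindexing of the eigenbasis), and that the independence of $B$ from $C$ provided by Theorem \ref{t.Representation} is what legitimizes the conditioning arguments baked into Theorem \ref{t.SB}. Both points are already established earlier in the paper, so the proof is indeed immediate.
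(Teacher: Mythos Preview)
Your proposal is correct and follows exactly the paper's own approach: the paper simply states that combining Proposition \ref{p.kuelbsLiLemma4.1} with Theorem \ref{t.Representation} and Theorem \ref{t.SB} (with $\alpha=1$, $\beta=0$, and the given $K$) ``immediately yields'' Theorem \ref{t.smallBall}. Your explicit arithmetic check of the constant $\pi/4$ is a welcome addition that the paper omits.
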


\begin{remark}
Note that the logarithmic asymptotics determined by Remarks
\ref{r.1} and
\ref{r.2} are sufficient to show that Theorem \ref{t.SB} has the correct
order of asymptotics for general $\omega$ satisfying $\|\omega\|_1<\infty$.
\end{remark}


As was done in \cite{KuelbsLi2005}, Theorem \ref{t.smallBall} may be used to
prove a functional law of the iterated logarithm for $Z$.
This immediately implies a Chung-like law of the
iterated logarithm, or one may prove this directly from the first-order small
deviations estimates proved in Theorem \ref{t.smallBall} as was done in
\cite{Remillard1994}.  The proofs follow exactly analogously to
the finite-dimensional cases in \cite{Remillard1994}
and \cite{KuelbsLi2005}, so we omit the proofs here.

\begin{theorem}
\label{t.chungLIL}
Let $Z(t)=\int_0^t \omega(W_s,dW_s)$ with $\|\omega(e_j,\cdot)\|_{\mathcal{H}}=O(j^{-r})$ for
$r>1$.
Then
\[
    \P\left(
        \liminf_{t\rightarrow\infty} \frac{\log\log{t}}{t} \sup_{0\leq s\leq t}
        \left|Z(s)\right|= \frac{\pi}{4}\|\omega\|_1
    \right)=1.
\]
\end{theorem}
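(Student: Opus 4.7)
The plan is to combine the $1$-self-similarity of $Z$ with the first-order small-ball estimate from Theorem~\ref{t.smallBall}, and then run the standard two-sided Chung-type Borel--Cantelli argument, as in \cite{Remillard1994,KuelbsLi2005}. A change of variables $s=\lambda u$ in the defining stochastic integral, together with the Brownian scaling $W_{\lambda u}/\sqrt{\lambda}\deq W_u$, gives $\{Z(\lambda t)\}_{t\ge0}\deq\{\lambda Z(t)\}_{t\ge0}$, hence $M(T):=\sup_{0\le s\le T}|Z(s)|\deq T\,M(1)$. Taking $m=1$, $t_1=b_1=1$, $a_1=0$ in Theorem~\ref{t.smallBall} yields
\[
\lim_{\varepsilon\downarrow 0}\varepsilon\log\P(M(1)\le\varepsilon)=-K,\qquad K:=\tfrac{\pi}{4}\|\omega\|_1,
\]
so that $\log\P(M(T)\le cT/\log\log T)\sim -(K/c)\log\log T$ for each fixed $c>0$.

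To get $\liminf(M(t)\log\log t)/t\ge K$ almost surely, fix $c<K$ and $\theta>1$ and set $t_n:=\theta^n$; then the probabilities $\P(M(t_n)\le c\,t_n/\log\log t_n)=\P(M(1)\le c/\log\log t_n)$ decay like $(\log t_n)^{-K/c}\sim n^{-K/c}$ and are summable in $n$ because $K/c>1$. Borel--Cantelli, combined with the monotonicity of $M$ on $[t_n,t_{n+1}]$, yields $\liminf(\log\log t/t)M(t)\ge c/\theta$; sending $c\uparrow K$ and $\theta\downarrow 1$ closes this side.

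For the matching upper bound $\liminf\le K$, I would pass to the representation $Z\deq B\circ C$ from Theorem~\ref{t.Representation}, so that, conditional on $C$, the process has independent Gaussian increments over disjoint time-intervals. Fix $c>K$ and a rapidly growing subsequence $t_n$ (for instance $t_n=\exp(n\log n)$) and define
\[
A_n := \{\sup_{t_{n-1}\le s\le t_n}|B(C(s))-B(C(t_{n-1}))|\le (c/3)\,t_n/\log\log t_n\}.
\]
These events are $P_C$-independent; combining the Brownian small-ball lower bound from \eqref{e.bmb} with the small-ball asymptotics for $C$ in Proposition~\ref{p.kuelbsLiLemma4.1}, one shows that the conditional probabilities $P_C(A_n)$ sum to $\infty$ almost surely, and a conditional second Borel--Cantelli forces $A_n$ infinitely often. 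The elementary bound $M(t_n)\le M(t_{n-1})+|B(C(t_{n-1}))|+\sup_{[t_{n-1},t_n]}|B(C(s))-B(C(t_{n-1}))|$, combined with $t_{n-1}/t_n\to 0$ fast enough and an a.s.\ upper-class estimate for $M$ (for example $M(t)=O(t\log\log t)$), transfers $A_n$ i.o.\ into $M(t_n)\le c\,t_n/\log\log t_n$ i.o. Since $c>K$ is arbitrary, $\liminf\le K$ follows.

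The main obstacle is this upper bound: $Z$ itself does not have independent increments, since $\omega(W_s,dW_s)$ couples values of $W$ across times, so independence must be harvested from the time-changed representation and used only after conditioning on $C$. Carrying this out requires upgrading the distributional small-ball estimate for $C$ from Proposition~\ref{p.kuelbsLiLemma4.1} to pathwise lower bounds on the conditional probabilities $P_C(A_n)$ valid along the subsequence $\{t_n\}$, which is the only step beyond a routine adaptation of the finite-dimensional arguments in \cite{Remillard1994,KuelbsLi2005}.
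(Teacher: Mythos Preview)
Your lower bound (the $\liminf\ge K$ direction) is correct and is exactly the standard argument the paper has in mind, following \cite{Remillard1994}.

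The upper bound, however, has a genuine gap. Your plan is to condition on $C$, exploit the $P_C$-independence of the events
\[
A_n=\Big\{\sup_{t_{n-1}\le s\le t_n}|B(C(s))-B(C(t_{n-1}))|\le \tfrac{c}{3}\,t_n/\log\log t_n\Big\},
\]
and conclude via a conditional second Borel--Cantelli that $A_n$ occurs infinitely often. For this you need $\sum_n P_C(A_n)=\infty$ almost surely, and you propose to get it by ``combining \eqref{e.bmb} with the small-ball asymptotics for $C$ in Proposition~\ref{p.kuelbsLiLemma4.1}.'' But that proposition is a \emph{distributional} statement about $\P(\Delta_nC\le\varepsilon)$; it says nothing about a fixed realization of $C$. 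In fact, for almost every path of $C$ one has $\Delta_nC$ of order $t_n^2$ (not $t_n^2/(\log\log t_n)^2$) for typical $n$, so that
\[
P_C(A_n)\;\asymp\;\exp\!\Big(-\tfrac{\pi^2}{8}\,\tfrac{9(\log\log t_n)^2}{c^2 t_n^2}\,\Delta_nC\Big)\;\asymp\;\exp\big(-\mathrm{const}\cdot(\log\log t_n)^2\big),
\]
which is far too small for the series to diverge on its own. What you actually need is that $\Delta_nC\le c'\,t_n^2/(\log\log t_n)^2$ \emph{infinitely often} along your subsequence, i.e.\ a Chung-type LIL for $C$ itself. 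Since $C(t)=\sum_j q_j^2\int_0^t (B_s^j)^2\,ds$ does \emph{not} have independent increments, establishing this is no easier than the original problem; you have traded one dependence obstruction for another.

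The paper sidesteps this entirely. It either (i) derives the Chung LIL as a corollary of the functional LIL (Theorem~\ref{t.FLIL}), whose upper bound is proved, following \cite{KuelbsLi2005}, by decomposing $Z$ through the independent increments of the \emph{underlying} Brownian motion $W$ (not of $B\circ C$): one sets $Z_k(s)=\int_{n_{2k-1}}^{n_{2k-1}+s}\omega(W_u-W_{n_{2k-1}},dW_u)$, which are genuinely independent in $k$, and controls the cross terms $\omega(W_{n_{2k-1}},W_{n_{2k}s}-W_{n_{2k-1}})$ separately; or (ii) proceeds directly as in \cite{Remillard1994}, using the two-point correlation bound
\[
\P\big(M(s)\le x,\ M(t)\le y\big)\;\le\;c\,\E\Big[\exp\Big(-\tfrac{\pi^2}{8x^2}C(s)-\tfrac{\pi^2}{8y^2}\big(C(t)-C(s)\big)\Big)\Big]
\]
(which is essentially the $m=2$ case of \eqref{e.step1}) together with the Kochen--Stone lemma. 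Either route avoids the pathwise control on $C$ that your argument would require.
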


\begin{theorem}\label{t.FLIL}
Let $Z(t)=\int_0^t \omega(W_s,dW_s)$ with $\|\omega(e_j,\cdot)\|_{\mathcal{H}}=O(j^{-r})$ for
$r>1$. Let
\[
\eta_n(t):=\frac{\log\log{n}}{\frac{\pi}{4}\|\omega\|_1 n}\sup_{0\le s\le nt}
|Z(s)|, \]
and let $\mathcal{M}$ denote the set of non-negative, non-decreasing continuous
  functions such that $f(0)=0$ and $\displaystyle \lim_{t\rightarrow\infty}
  f(t)=\infty$.
  Then, with probability 1, $\{\eta_n\}$ is relatively compact in
$\mathcal{M}$ and  the set of cluster points of $\{\eta_n\}$ is
  \begin{equation*}
    \left\{
            f\in\mathcal{M}:\int_0^\infty f^{-1}(s)\,ds\leq 1
        \right\}.
  \end{equation*}
\end{theorem}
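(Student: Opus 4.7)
The plan is to mirror the approach of Kuelbs--Li in the finite-dimensional setting~\cite{KuelbsLi2005}, using Theorem~\ref{t.smallBall} as a black-box small-ball input and the representation of Theorem~\ref{t.Representation} to transfer Brownian scaling to $Z$. The first step is the distributional scaling
\[
\{Z(nt)\}_{t\ge 0} \deq \{n Z(t)\}_{t\ge 0},
\]
which follows because each $(X^k)^2+(Y^k)^2$ is $1$-self-similar in $t$ (so the clock $C$ of Theorem~\ref{t.Representation} is $2$-self-similar), and $B$ is $1/2$-self-similar independently of $C$. Consequently
$\eta_n(\cdot) \deq \frac{\log\log n}{(\pi/4)\|\omega\|_1}\, M(\cdot)$
where $M(t)=\sup_{0\le s\le t}|Z(s)|$, which converts the functional LIL into a statement about rescalings of a single process.

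Next I would convert Theorem~\ref{t.smallBall} into an estimate for $\P(\eta_n \in U)$ where $U$ is a basic neighborhood of $f\in\mathcal{M}$. Given a finite grid $0=t_0<t_1<\cdots<t_m$ and $\delta>0$, take $\varepsilon=(\pi/4)\|\omega\|_1/\log\log n$ with $a_i=f(t_i)-\delta$, $b_i=f(t_i)+\delta$ in Theorem~\ref{t.smallBall}. Using the scaling above yields
\[
\P\!\left(|\eta_n(t_i) - f(t_i)|<\delta \text{ for all }i\right)
= \exp\!\left(-(1+o(1))\log\log n \cdot I_\delta(f;\{t_i\})\right),
\]
with $I_\delta(f;\{t_i\}) = \sum_i \Delta_i t/(f(t_i)+\delta)$. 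Refining the grid and sending $\delta\downarrow 0$ produces the rate functional $\int_0^\infty f^{-1}(s)\,ds$ that characterizes $K$.

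For the upper direction (all cluster points lie in $K$) and relative compactness, fix $f\notin K$ and choose a grid and $\delta$ with $I_\delta > 1+\kappa$ for some $\kappa>0$. Along the geometric subsequence $n_k=\lfloor\rho^k\rfloor$ we have $\log\log n_k \sim \log k$, so $\sum_k k^{-(1+\kappa)}$ converges and first Borel--Cantelli gives $\eta_{n_k}\notin U$ eventually almost surely. A separability argument covers $\mathcal{M}\setminus K$ by countably many such neighborhoods, and an interpolation between $n_k$ and $n_{k+1}$ using monotonicity of $M$ together with the Chung-type bounds of Theorem~\ref{t.chungLIL} promotes the conclusion to the full sequence and yields the tightness needed for relative compactness.

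For the lower direction (every $f\in K$ is a cluster point), the strategy is a second Borel--Cantelli along a sparser subsequence $n_k=\lfloor\exp(k^{1+\delta})\rfloor$, chosen so that contributions from disjoint blocks $[n_{k-1}T,n_kT]$ are nearly independent. Conditioning on $C$, the increments of $B$ over disjoint intervals of $C$-time are exactly independent, and the small-ball estimate gives a lower bound of the form $\P(\eta_{n_k}\in U\mid\mathcal{F}_{n_{k-1}T}) \ge \exp(-(1+o(1))\log\log n_k)$ for $f\in K$; a conditional (L\'evy form) Borel--Cantelli lemma then produces the desired i.o.\ statement. The principal obstacle is handling the dependence across blocks introduced by the shared clock $C$; in~\cite{KuelbsLi2005} this is bypassed by an explicit finite-dimensional decomposition of the driving Brownian motion, and in the present infinite-dimensional setting one would execute the same decomposition on the truncations $Z^n$ of Proposition~\ref{p.Mn} and then pass to the limit using the $L^p$-convergence~\eqref{e.martingaleLimit}.
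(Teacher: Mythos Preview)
The paper omits the proof entirely, deferring to \cite{KuelbsLi2005}, and your outline correctly tracks that strategy with Theorem~\ref{t.smallBall} supplying the small-ball input. The scaling, the upper inclusion, and relative compactness are on target (although for the interpolation to the full sequence you want $n_{k+1}/n_k\to1$, e.g.\ $n_k=e^{k/\log k}$ as in \cite{ChenKuelbsLi2000}, not a fixed-ratio geometric sequence, or the factor $\rho$ survives the monotonicity sandwich).

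The genuine gap is in the lower inclusion. Neither device you propose works as stated. Conditioning on $C$ does not make the events $\{\eta_{n_k}\in U\}$ independent: they involve suprema of $B$ over the \emph{nested} intervals $[0,C(n_kT)]$, and passing instead to a conditional Borel--Cantelli with filtration $\mathcal{F}_{n_{k-1}T}$ still runs into the fact that $Z$ does not have independent increments---the integrand $\omega(W_s,\cdot)$ carries the whole past of $W$. The truncation route is also problematic: applying \cite{KuelbsLi2005} to each $Z^n$ yields an almost-sure FLIL for $Z^n$, but the $L^p$ convergence~\eqref{e.martingaleLimit} does not transfer almost-sure cluster sets to $Z$. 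The correct move, and the one the paper intends by ``exactly analogously,'' is that the Kuelbs--Li block decomposition applies \emph{directly} to the infinite-dimensional $Z$, with no truncation. With $n_k=k^k$, set
\[
\hat Z_k(s)=\int_{n_{2k-1}}^{n_{2k-1}+s}\omega\bigl(W_u-W_{n_{2k-1}},\,dW_u\bigr),\qquad s\ge0;
\]
these are i.i.d.\ copies of $Z$ by the independent increments of the $\mathcal{W}$-valued Brownian motion $W$, and one has the exact identity
\[
Z(n_{2k-1}+s)-Z(n_{2k-1})=\hat Z_k(s)+\omega\bigl(W_{n_{2k-1}},\,W_{n_{2k-1}+s}-W_{n_{2k-1}}\bigr).
\]
Both corrections---$Z(n_{2k-1})$ and the bilinear cross term---are $o(n_{2k}/\log\log n_{2k})$ almost surely by standard Gaussian tail bounds together with the continuity of $\omega$ on $\mathcal{W}\times\mathcal{W}$. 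Second Borel--Cantelli on the genuinely independent events built from $\hat Z_k$ then finishes the argument exactly as in \cite{KuelbsLi2005}.
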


From here it is possible to obtain various occupation measure results for the maximal
process of $Z$, as was done in \cite{ChenKuelbsLi2000}, \cite{KuelbsLi2002}, and
\cite{KuelbsLi2005}.

\begin{remark}
Note that Theorems \ref{t.smallBall}, \ref{t.chungLIL}, and \ref{t.FLIL} also
include the finite-dimensional stochastic integrals already studied in
\cite{Remillard1994} and \cite{KuelbsLi2005}.  The difference in factors of 2
arises from the fact that the non-zero singular values of $Q$ necessarily have
multiplicity which is a factor of 2 and the sum in $\|\omega\|_1$ counts all of these.
\end{remark}






\end{document}